
\documentclass[12pt]{amsart}
%%%%%%%%%%%%%%%%%%%%%%%%%%%%%%%%%%%%%%%%%%%%%%%%%%%%%%%%%%%%%%%%%%%%%%%%%%%%%%%%%%%%%%%%%%%%%%%%%%%%%%%%%%%%%%%%%%%%%%%%%%%%%%%%%%%%%%%%%%%%%%%%%%%%%%%%%%%%%%%%%%%%%%%%%%%%%%%%%%%%%%%%%%%%%%%%%%%%%%%%%%%%%%%%%%%%%%%%%%%%%%%%%%%%%%%%%%%%%%%%%%%%%%%%%%%%
\usepackage{amsfonts}
\usepackage{amsthm}
\usepackage[centertags]{amsmath}
\usepackage{amssymb}
\usepackage{enumerate}
\usepackage{graphicx}
\usepackage[bookmarks=true, pdfstartview={FitH}, pdftitle={Paper},
pdfauthor={Sadia Arshad}]{hyperref}
\usepackage{color}
\usepackage{afterpage}
\usepackage{boxedminipage}
\usepackage{fancybox}
\usepackage{fancyhdr}
\usepackage{inputenc}

\setcounter{MaxMatrixCols}{10}
%TCIDATA{OutputFilter=LATEX.DLL}
%TCIDATA{Version=5.50.0.2953}
%TCIDATA{<META NAME="SaveForMode" CONTENT="1">}
%TCIDATA{BibliographyScheme=Manual}
%TCIDATA{LastRevised=Saturday, November 09, 2019 14:13:05}
%TCIDATA{<META NAME="GraphicsSave" CONTENT="32">}

\providecommand{\U}[1]{\protect\rule{.1in}{.1in}}
\theoremstyle{plain}
\newtheorem{theorem}{Theorem}[section]
\newtheorem{corollary}[theorem]{Corollary}
\newtheorem{definition}[theorem]{Definition}
\newtheorem{example}[theorem]{Example}

\newtheorem{proposition}[theorem]{Proposition}
\newtheorem{remark}[theorem]{Remark}

\numberwithin{equation}{section}
\input{tcilatex}

\begin{document}

\begin{center}
{\Large A new concept of differentiability for interval-valued functions} 
{\Large \textbf{\ }}\\[5mm]
{\large {Vasile Lupulescu}}$^{1}${\large {\ and Donal O'Regan}}$^{2}${\large 
{\ }\\[10mm]
}

$^{1}${\footnotesize Constantin Brancusi University, Republicii 1, 210152
Targu-Jiu, Romania,}

{\footnotesize e-mail: lupulescu\_v@yahoo.com}

$^{2}${\footnotesize School of Mathematics, Statistics and Applied
Mathematics, National University of Ireland, Galway, Ireland.}

{\footnotesize e-mail: donal.oregan@nuigalway.ie}\\[5mm]
\end{center}

{\footnotesize \textbf{Abstract}. }In this paper we propose a new concept of
differentiability for interval-valued functions. This concept is based on
the properties of the Hausdorff-Pompeiu metric and avoids using the
generalized Hukuhara difference.\footnote{\textsf{2010 Mathematics Subject
Classification:} 26A06: 26A99: 46G05; 46G99} \footnote{\textsf{Keywords:}
Interval-valued function; Generalized Hukuhara derivative} 
\afterpage{
\fancyhead{} \fancyfoot{} 
\fancyhead[LE, RO]{\bf\thepage}
\fancyhead[LO]{\small Short title}
\fancyhead[RE]{\small Author(s) name  }
\fancyfoot[C]{\small******************************************************************************\\
Surveys~in~Mathematics~and~its~Applications
\href{http://www.utgjiu.ro/math/sma/v13/v13.html}{{\bf 13}~(2018)}, xx -- yy \\
 \href{http://www.utgjiu.ro/math/sma}{http://www.utgjiu.ro/math/sma}}}

\section{\protect\Large Preliminaries}

Let us denote by $\mathcal{K}$ the set of all nonempty compact intervals of
the real line $\mathbb{R}$. If $A=[a^{-},a^{+}]$, $B=[b^{-},b^{+}]\in 
\mathcal{K}$, then the usual interval operations, i.e. Minkowski addition
and scalar multiplication, are defined by%
\begin{equation*}
A+B=[a^{-},a^{+}]+[b^{-},b^{+}]=[a^{-}+b^{-},a^{+}+b^{+}]
\end{equation*}%
and%
\begin{equation*}
\lambda A=\lambda \lbrack a^{-},a^{+}]=\left\{ 
\begin{tabular}{ll}
$\lbrack \lambda a^{-},\lambda a^{+}]$ & if $\lambda >0$ \\ 
$\{0\}$ & if $\lambda =0$ \\ 
$\lbrack \lambda a^{+},\lambda a^{-}]$ & if $\lambda <0$,%
\end{tabular}%
\right. 
\end{equation*}%
respectively. If $\lambda =-1$, scalar multiplication gives the opposite%
\begin{equation*}
-A:=(-1)A=(-1)[a^{-},a^{+}]=[-a^{+},-a^{-}].
\end{equation*}%
In general, $A+(-A)\neq \{0\}$; that is, the opposite of $A$ is not the
inverse of $A$ with respect to the Minkowski addition (unless $A=\{a\}$ is a
singleton). Minkowski difference is $A-B=A+(-1)B=[a^{-}-b^{+},a^{+}-b^{-}]$.
\vskip0.3cm

\noindent The \emph{generalized Hukuhara difference }(or $gH$\emph{%
-difference}) of two intervals $[a^{-},a^{+}]$, $[b^{-},b^{+}]\in \mathcal{K}
$ is defined as follows (Markov\textbf{\ }\cite{mar}): 
\begin{equation}
\lbrack a^{-},a^{+}]\ominus \lbrack b^{-},b^{+}]=[\min
\{a^{-}-b^{-},a^{+}-b^{+}\},\max \{a^{-}-b^{-},a^{+}-b^{+}\}].  \label{gH}
\end{equation}

\noindent We denote the \emph{width} of an interval $A=[a^{-},a^{+}]$ by $%
w(A)=a^{+}-a^{-}$. Then, for $A=[a^{-},a^{+}]$ and $B=[b^{-},b^{+}]$, we have%
\begin{equation}
A\ominus B=\left\{ 
\begin{tabular}{l}
$\lbrack a^{-}-b^{-},a^{+}-b^{+}]$, if $w(A)\geq w(B)$ \\ 
$\lbrack a^{+}-b^{+},a^{-}-b^{-}]$, if $w(A)<w(B).$%
\end{tabular}%
\right.   \label{gH_1}
\end{equation}%
If $A,B,C\in \mathcal{K}$ then it is easy to see that 
\begin{equation}
A\ominus B=C\Longleftrightarrow \left\{ 
\begin{tabular}{l}
$A=B+C$, if $w(A)\geq w(B)$ \\ 
$B=A+(-C)$, if $w(A)<w(B).$%
\end{tabular}%
\right.   \label{equiv-1}
\end{equation}%
If $A,B\in \mathcal{K}$ and $w(A)\geq w(B)$,\ then the $gH$-difference $%
A\ominus B$ will be denoted by $A\ominus B$ and it is called the \emph{%
Hukuhara difference }(or $H$\emph{-difference}) of $A$ and $B$. For other
properties involving the operations on $\mathcal{K}$, see Markov \cite{mar}.
If $A\in \mathcal{K}$, let us define the norm of $A$ by$\left\Vert
A\right\Vert :=\max \{\left\vert a^{-}\right\vert ,\left\vert
a^{+}\right\vert \}$. Then it is easy to see that $\left\Vert \cdot
\right\Vert $ is a norm on $\mathcal{K}$, and therefore $(\mathcal{K}%
,+,\cdot ,\left\Vert \cdot \right\Vert )$ is a normed quasilinear space. A
metric structure on $\mathcal{K}$ is given by the Hausdorff-Pompeiu distance 
$\mathcal{H}:\mathcal{K}\times \mathcal{K}\rightarrow \lbrack 0,\infty 
\mathbb{)}$ defined by $\mathcal{H}(A,B)=\max \{\left\vert
a^{-}-b^{-}\right\vert ,\left\vert a^{+}-b^{+}\right\vert \}$, where $%
A=[a^{-},a^{+}]$ and $B=[b^{-},b^{+}]$. Obviously, the metric $\mathcal{H}$
is associated with the norm $\left\Vert \cdot \right\Vert $ by $\left\Vert
A\right\Vert =\mathcal{H}(A,\{0\})$ and $\mathcal{H}(A,B)=\left\Vert
A\ominus B\right\Vert $.\ It is well known that $(\mathcal{K},\mathcal{H})$
is a complete. \vskip0.3cm

\begin{proposition}
\label{prop-3} \textit{The }$H$\textit{-difference }$\ominus $\textit{\ has
the following properties }(see \cite{lak1}):\newline
(a) $A\ominus \theta =A$\ and $A\ominus A=\theta $\textit{\ for all} $A\in 
\mathcal{K}$.\newline
(b) $(-A)\ominus (-B)=-(A\ominus B)$\textit{. }\newline
(c)\textit{\ }$(A+B)\ominus B=A$.\newline
(d) $A+(B\ominus A)=B$.\newline
(e) $(B+C)\ominus A=B\ominus A+C$.\newline
(f) $A\ominus B+C\ominus D=(A+C)\ominus (B+D)$.\newline
(g) $A\ominus B+B\ominus C=A\ominus C$.\newline
\end{proposition}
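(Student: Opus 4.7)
My plan is to prove all seven properties by exploiting the characterization of the Hukuhara difference given in equation~(\ref{equiv-1}): when $w(A)\ge w(B)$, one has
\[
A\ominus B=C\Longleftrightarrow A=B+C.
\]
This converts every identity involving $\ominus$ into an equivalent identity about Minkowski addition, which is commutative, associative, and has $\theta=\{0\}$ as identity on $\mathcal{K}$. Throughout, each appearance of $A\ominus B$ implicitly carries the width condition $w(A)\ge w(B)$, and the width identity $w(X+Y)=w(X)+w(Y)$ will guarantee that every H-difference we encounter is well defined.

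The first four items are essentially one-line applications of the characterization. Part (a) follows from the trivial identities $A=\theta+A$ and $A=A+\theta$, together with $w(A)\ge 0=w(\theta)$. Part (b) is obtained by applying scalar multiplication by $-1$ to $A=B+(A\ominus B)$, which yields $-A=(-B)+\bigl(-(A\ominus B)\bigr)$; since $w(-X)=w(X)$, the width condition is preserved, so $(-A)\ominus(-B)=-(A\ominus B)$. Part (c) is commutativity of addition: $A+B=B+A$, and $w(A+B)=w(A)+w(B)\ge w(B)$, so $(A+B)\ominus B=A$. Part (d) is literally the direction ``$\Leftarrow$'' of the characterization applied to $B\ominus A$.

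For parts (e)--(g), I would chain the characterization. For (e), starting from $B=A+(B\ominus A)$, add $C$ to both sides to get $B+C=A+\bigl[(B\ominus A)+C\bigr]$; verifying $w(B+C)\ge w(A)$ (immediate since $w(B)\ge w(A)$ and $w(C)\ge 0$) gives $(B+C)\ominus A=(B\ominus A)+C$. For (f), combine $A=B+(A\ominus B)$ and $C=D+(C\ominus D)$ by addition to obtain $A+C=(B+D)+\bigl[(A\ominus B)+(C\ominus D)\bigr]$, and conclude by the characterization (the width condition $w(A+C)\ge w(B+D)$ follows from $w(A)\ge w(B)$ and $w(C)\ge w(D)$). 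For (g), substitute $B=C+(B\ominus C)$ into $A=B+(A\ominus B)$ to get $A=C+\bigl[(A\ominus B)+(B\ominus C)\bigr]$, whence $A\ominus C=(A\ominus B)+(B\ominus C)$; here the hierarchy $w(A)\ge w(B)\ge w(C)$ is used to ensure each difference exists.

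The only real obstacle is bookkeeping the width hypotheses so that each invocation of~(\ref{equiv-1}) is legitimate; all of these reduce to the monotonicity $w(X+Y)=w(X)+w(Y)$ and the transitivity of the assumed width inequalities, so no genuine difficulty arises. In each case the verification is essentially a one-line rewriting once the correct instance of $A=B+(A\ominus B)$ has been inserted.
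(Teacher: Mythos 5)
The paper itself gives no proof of this proposition; it simply refers the reader to \cite{lak1}, so there is no argument of the authors' to compare against. Your proposal is a correct, self-contained proof. Reducing every identity to the characterization $A\ominus B=C\Longleftrightarrow A=B+C$ (the first case of (\ref{equiv-1}), valid precisely when $w(A)\geq w(B)$) is legitimate because Minkowski addition on $\mathcal{K}$ is cancellative, so the $C$ in $A=B+C$ is unique; together with $w(X+Y)=w(X)+w(Y)$ and $w(-X)=w(X)$ this makes each of (a)--(g) a one-line rewriting, exactly as you describe. The alternative (and the route taken in the cited sources) is to compute endpoints directly from $A\ominus B=[a^{-}-b^{-},a^{+}-b^{+}]$, which is equally short; your version has the mild advantage of not touching coordinates and of making explicit where each width hypothesis is used, e.g.\ the chain $w(A)\geq w(B)\geq w(C)$ in (g). No gaps.
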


\vskip0.3cm

\begin{proposition}
\label{p-1} The Hausdorff-Pompeiu distance has the following properties (see 
\cite{lak1}):

(i) $\mathcal{H}(A+C,B+C)=\mathcal{H}(A,B),$\newline
(ii) $\mathcal{H}(\lambda A,\lambda B)=|\lambda |\mathcal{H}(A,B)$ for $%
\lambda \in \mathbb{R},$\newline
(iii) $\mathcal{H}(A+B,C+D)\leq \mathcal{H}(A,C)+\mathcal{H}(B,D),$\newline
(iv) $\mathcal{H}(\lambda A,\mu A)=|\lambda -\mu |\mathcal{H}(A,\theta )$
for $\lambda \mu \geq 0,$\newline
(v) $\mathcal{H}(A\ominus B,C)=\mathcal{H}(A,B+C)$\textit{,}\newline
(vi) $\mathcal{H}(A,B)=\mathcal{H}(A\ominus C,B\ominus C)$\textit{,}\newline
(vii) $\mathcal{H}(A\ominus B,C\ominus D)\leq \mathcal{H}(A,C)+\mathcal{H}%
(B,D)$\textit{,}\newline
(viii) $\mathcal{H}(A+B,C+D+E\ominus F)\leq \mathcal{H}(A,C+E)+\mathcal{H}%
(D,B+F)$\textit{,}\newline
(ix) $\mathcal{H}(A\ominus B,C\ominus D+E+F)\leq \mathcal{H}(A,C+E)+\mathcal{%
H}(D,B+F)$\textit{.}\newline

for all $A,B,C,D,E,F\in \mathcal{K}.$
\end{proposition}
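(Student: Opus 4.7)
My plan is to verify each of the nine properties by reducing them either to the endpoint formula $\mathcal{H}(A,B)=\max\{|a^--b^-|,|a^+-b^+|\}$ or to the identity $\mathcal{H}(A,B)=\|A\ominus B\|$ combined with the algebraic rules in Proposition \ref{prop-3}. The first four items are routine one-line computations with endpoints; the real work is in items (v)--(ix), where the width-dependent branching in formula (\ref{gH_1}) forces a case analysis.

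For (i)--(iv) I would argue as follows. Item (i) is immediate because $c^\pm$ cancels inside each absolute value, so the two maxima agree. For (ii), I split into $\lambda\geq 0$ and $\lambda<0$: in the first case scalar multiplication preserves endpoints, in the second it swaps them, and in both subcases $|\lambda|$ factors out of the max. Item (iii) follows by applying the scalar triangle inequality endpoint-wise and then bounding the max of two sums by the sum of the maxes. For (iv), the hypothesis $\lambda\mu\geq 0$ guarantees that $\lambda a^-$ and $\mu a^-$ are both endpoints on the same side of $\lambda A$ and $\mu A$ (after the possible left--right swap), so $|\lambda a^\pm-\mu a^\pm|=|\lambda-\mu||a^\pm|$ and taking the max gives $|\lambda-\mu|\,\mathcal{H}(A,\{0\})$.

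For items (v), (vi), (vii) I would invoke (\ref{gH_1}) and split according to whether $w(A)\geq w(B)$ or $w(A)<w(B)$ (and similarly for $C,D$ in (vii)). In every subcase $A\ominus B$ is an explicit interval, so $\mathcal{H}(A\ominus B,C)$ becomes a concrete max that collapses, after relabeling, to $\mathcal{H}(A,B+C)$ for (v); a fully symmetric argument (branching on $w(A)$ vs $w(C)$ and $w(B)$ vs $w(C)$) handles (vi). Item (vii) is then quickest by rewriting $\mathcal{H}(A\ominus B,C\ominus D)=\|(A\ominus B)\ominus(C\ominus D)\|$, using Proposition \ref{prop-3}(f),(g) to regroup, and applying (iii).

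The inequalities (viii) and (ix) will be the main obstacle, because now two width-dichotomies interact with a triangle-type estimate. My plan is to avoid enumerating all four subcases directly and instead combine (v) with (iii): for (viii), write $C+D+E\ominus F=(C+E)+(D\ominus F\text{ suitably grouped})$ via Proposition \ref{prop-3}(e),(f), so that the claim reduces to $\mathcal{H}(A+B,(C+E)+(D\ominus F))\leq \mathcal{H}(A,C+E)+\mathcal{H}(B,D\ominus F+\cdots)$, which is (iii) after applying (v) to absorb the $\ominus F$. Item (ix) is handled analogously by first applying (v) to rewrite the left side without the outer $\ominus$, then invoking (iii) and a careful use of (vi). The delicate point throughout will be making sure the regroupings in Proposition \ref{prop-3}(e)--(g) are applied only where their underlying width hypotheses hold, which is why I will keep track of the widths explicitly in the few subcases that remain irreducible.
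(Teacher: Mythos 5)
The paper offers no proof of this proposition at all --- it is quoted from \cite{lak1} --- so there is nothing to compare your argument to line by line; I can only judge the plan on its merits. Items (i)--(iv) and (vii) are fine as you describe them, and your reduction of (viii) and (ix) to (v) plus (iii) is the right idea (reading the statements as $(C+D+E)\ominus F$ and $(C\ominus D)+E+F$, one gets $\mathcal{H}(A+B,(C+D+E)\ominus F)=\mathcal{H}(C+D+E,A+B+F)$ and $\mathcal{H}(A\ominus B,(C\ominus D)+E+F)=\mathcal{H}(B+C+E+F,A+D)$, after which (iii) finishes).

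The genuine gap is in your treatment of (v) and (vi). You propose to read $\ominus$ as the generalized Hukuhara difference and split on whether $w(A)\geq w(B)$ or $w(A)<w(B)$, expecting the max to ``collapse, after relabeling'' in both branches. In the second branch it does not collapse, because the identity is simply false there: take $A=\{0\}$, $B=[0,1]$, $C=[0,2]$. Then $A\ominus B=[-1,0]$ by (\ref{gH_1}), so $\mathcal{H}(A\ominus B,C)=\max\{1,2\}=2$, while $\mathcal{H}(A,B+C)=\mathcal{H}([0,0],[0,3])=3$. A similar example ($A=\{0\}$, $B=[0,3]$, $C=[0,1]$) kills (vi) in the mixed-width case. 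The point you are missing is the convention set up just before Proposition \ref{prop-3}: in these statements $\ominus$ is the \emph{Hukuhara} difference, which exists only when the width condition holds and is characterized by $A=B+(A\ominus B)$ (Proposition \ref{prop-3}(d)). With that characterization, (v) and (vi) are one-line consequences of translation invariance (i): writing $A\ominus B=P$ with $A=B+P$ gives $\mathcal{H}(A,B+C)=\mathcal{H}(B+P,B+C)=\mathcal{H}(P,C)$, and likewise for (vi). This same characterization is what legitimizes the regroupings from Proposition \ref{prop-3}(e)--(g) that you want to use in (vii)--(ix); once you adopt it, the width bookkeeping you were worried about disappears entirely rather than surviving in ``a few irreducible subcases.''
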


\vskip0.3cm

\noindent Since $\mathcal{K}$\ is a normed quasilinear space, the continuity
and the limits of an interval-valued function $F:[a,b]\rightarrow \mathcal{K}
$ are understood in the sense of the norm $\left\Vert \cdot \right\Vert $.
We recall that if $F:[a,b]\rightarrow \mathcal{K}$ is an interval-valued
function such that $F(t)=[f^{-}(t),f^{+}(t)]$, then $\underset{t\rightarrow
t_{0}}{\lim }F(t)$ exists, if and only if $\underset{t\rightarrow t_{0}}{%
\lim }f^{-}(t)$ and $\underset{t\rightarrow t_{0}}{\lim }f^{+}(t)$ exist as
finite numbers. In this case, we have%
\begin{equation*}
\underset{t\rightarrow t_{0}}{\lim }F(t)=\left[ \underset{t\rightarrow t_{0}}%
{\lim }f^{-}(t),\underset{t\rightarrow t_{0}}{\lim }f^{+}(t)\right] .
\end{equation*}%
In particular, $F\ $is continuous if and only if $f^{-}$ and $f^{+}$ are
continuous. If\textit{\ }$F,G:[a,b]\rightarrow \mathcal{K}$\textit{\ }are
two interval-valued functions, then we define the interval-valued function $%
F\ominus G:[a,b]\rightarrow \mathcal{K}$ by $(F\ominus G)(t):=F(t)\ominus
G(t)$, for all $t\in \lbrack a,b]$. If there exist $\underset{t\rightarrow
t_{0}}{\lim }F(t)=A$\ and $\underset{t\rightarrow t_{0}}{\lim }G(t)=B$, then 
$\underset{t\rightarrow t_{0}}{\lim }(F\ominus G)(t)$\ exists, and%
\begin{equation*}
\underset{t\rightarrow t_{0}}{\lim }(F\ominus G)(t)=A\ominus B.
\end{equation*}%
In particular, if $F,G:[a,b]\rightarrow \mathcal{K}$\textit{\ }are
continuous, then the interval-function $F\ominus G\ $is a continuous
interval-valued function. Let $C([a,b],\mathcal{K})$ denote the set of
continuous interval-valued functions from $[a,b]$ into $\mathcal{K}$. Then $%
C([a,b],\mathcal{K})$ is a complete normed space with respect to the norm $%
\left\Vert F\right\Vert _{c}:=\underset{a\leq t\leq b}{\sup }\left\Vert
F(t)\right\Vert $. \vskip0.3cm

\begin{definition}
\label{def-1}\textbf{(}Markov\textbf{\ }\cite{mar}). Let $F:[a,b]\rightarrow 
\mathcal{K}$ be an interval-valued function and let $t_{0}\in \lbrack a,b]$.
We define $D_{H}F(t_{0})\in \mathcal{K}$ (provided it exists) as%
\begin{equation}
D_{H}F(t_{0})=\underset{h\rightarrow 0}{\lim }\frac{F(t_{0}+h)\ominus
F(t_{0})}{h}.  \label{gH-der}
\end{equation}%
We call $D_{H}F(t_{0})$ the \emph{generalized Hukuhara derivative} ($gH$-%
\emph{derivative} for short) of $F$ at $t_{0}$. Also, we define the \emph{%
left }$gH$-\emph{derivative }$D_{H}^{-}F(t_{0})\in \mathcal{K}$ (provided it
exists) as%
\begin{equation*}
D_{H}^{-}F(t_{0})=\underset{h\rightarrow 0^{+}}{\lim }\frac{F(t_{0})\ominus
F(t_{0}-h)}{h},.
\end{equation*}%
and the \emph{right }$gH$-\emph{derivative }$D_{H}^{+}F(t_{0})\in \mathcal{K}
$ (provided it exists) as%
\begin{equation*}
D_{H}^{+}F(t_{0})=\underset{h\rightarrow 0^{+}}{\lim }\frac{%
F(t_{0}+h)\ominus F(t_{0})}{h}.
\end{equation*}%
We say that $F$ is \emph{generalized Hukuhara differentiable} ($gH$-\emph{%
differentiable} for short) on $[a,b]$ if $D_{H}F(t_{0})\in \mathcal{K}$
exists at each point $t\in \lbrack a,b]$. At the end points of $[a,b]$ we
consider only the one sided $gH$-derivatives. The interval-valued function $%
D_{H}:[a,b]\rightarrow \mathcal{K}$ is then called the $gH$-\emph{derivative}
of $F$ on $[a,b]$. \vskip0.3cm
\end{definition}

\begin{proposition}
\label{p-2}\textbf{(}Markov\textbf{\ }\cite{mar}). \textit{Let }$%
F:[a,b]\rightarrow \mathcal{K}$\textit{\ be such that }$%
F(t)=[f^{-}(t),f^{+}(t)]$, $t\in \lbrack a,b]$. \textit{If the real-valued
functions} $f^{-}$ \textit{and} $f^{+}$\ \textit{are} \textit{differentiable
at }$t\in \lbrack a,b]$, \textit{then }$F$ \textit{is} $gH$-\textit{%
differentiable at }$t\in \lbrack a,b]$ \textit{and}%
\begin{equation}
D_{H}F(t_{0})=\left[ \min \left\{ \frac{d}{dt}f^{-}(t),\frac{d}{dt}%
f^{+}(t)\right\} ,\max \left\{ \frac{d}{dt}f^{-}(t),\frac{d}{dt}%
f^{+}(t)\right\} \right] .  \label{deriv_1}
\end{equation}
\end{proposition}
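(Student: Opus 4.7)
The plan is to work directly from the explicit formula \eqref{gH} for the $gH$-difference and the definition of scalar multiplication on $\mathcal{K}$, then invoke the endpoint characterization of limits stated in the preliminaries. Set
\[
\Delta^{-}(h):=f^{-}(t_{0}+h)-f^{-}(t_{0}),\qquad \Delta^{+}(h):=f^{+}(t_{0}+h)-f^{+}(t_{0}).
\]
By \eqref{gH} applied to $A=F(t_{0}+h)$ and $B=F(t_{0})$, I first obtain
\[
F(t_{0}+h)\ominus F(t_{0})=\bigl[\min\{\Delta^{-}(h),\Delta^{+}(h)\},\ \max\{\Delta^{-}(h),\Delta^{+}(h)\}\bigr].
\]

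Next I would compute $\frac{1}{h}\bigl(F(t_{0}+h)\ominus F(t_{0})\bigr)$ using the definition of scalar multiplication, splitting into the cases $h>0$ and $h<0$. For $h>0$ the endpoints are divided by a positive number, so one directly reads
\[
\tfrac{1}{h}(F(t_{0}+h)\ominus F(t_{0}))=\bigl[\min\{\Delta^{-}(h)/h,\Delta^{+}(h)/h\},\ \max\{\Delta^{-}(h)/h,\Delta^{+}(h)/h\}\bigr].
\]
For $h<0$ the endpoints are divided by a negative number and then swapped (by the definition of $\lambda A$ for $\lambda<0$); a short check shows that this swap exactly compensates for the order-reversing effect of dividing by a negative quantity, so the same formula holds. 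Thus, for every sufficiently small $h\neq 0$,
\[
\tfrac{1}{h}\bigl(F(t_{0}+h)\ominus F(t_{0})\bigr)=\bigl[\min\{\Delta^{-}(h)/h,\Delta^{+}(h)/h\},\ \max\{\Delta^{-}(h)/h,\Delta^{+}(h)/h\}\bigr].
\]

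Finally, I would pass to the limit $h\to 0$. Since $f^{\pm}$ are differentiable at $t_{0}$, the quotients $\Delta^{\pm}(h)/h$ converge to $\tfrac{d}{dt}f^{\pm}(t_{0})$, and by continuity of $\min$ and $\max$ both endpoint functions converge. Invoking the endpoint characterization of limits in $(\mathcal{K},\|\cdot\|)$ recalled in the preliminaries, the interval limit exists and equals
\[
\bigl[\min\{\tfrac{d}{dt}f^{-}(t_{0}),\tfrac{d}{dt}f^{+}(t_{0})\},\ \max\{\tfrac{d}{dt}f^{-}(t_{0}),\tfrac{d}{dt}f^{+}(t_{0})\}\bigr],
\]
which is exactly \eqref{deriv_1}. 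The only delicate step is the sign-case analysis for $h<0$: one must keep track of the order reversal induced both by $\lambda A$ with $\lambda<0$ and by dividing an inequality by a negative number, and verify that these two reversals cancel so that a single unified expression holds for all $h\neq 0$. Once this bookkeeping is done, the rest is a routine limit argument.
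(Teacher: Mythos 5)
Your argument is correct. The paper itself gives no proof of this proposition (it is quoted from Markov's paper \cite{mar}), so there is nothing to compare against line by line; but your direct computation from the formula (\ref{gH}) is the natural and complete argument: the identity $\tfrac{1}{h}\bigl(F(t_{0}+h)\ominus F(t_{0})\bigr)=\bigl[\min\{\Delta^{-}(h)/h,\Delta^{+}(h)/h\},\max\{\Delta^{-}(h)/h,\Delta^{+}(h)/h\}\bigr]$ holds for both signs of $h$ exactly for the cancellation reason you describe, and the passage to the limit via the endpoint characterization of convergence in $(\mathcal{K},\mathcal{H})$ (which is just componentwise convergence of the endpoints) is legitimate since $\min\le\max$ guarantees the limit is a genuine element of $\mathcal{K}$.
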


\noindent The converse of Proposition \ref{p-2} does not true, that is, the $%
gH$-differentiability of $F$ does not imply the differentiability of $f^{-}$
and $f^{+}$ (Markov \cite{mar}).\vskip0.3cm

\section{\protect\Large A new concept of differentiability for
interval-valued functions}

\noindent Let $F:[a,b]\rightarrow \mathcal{K}$ be a given function. We say
that $F$ is \emph{left differentiable} at $t_{0}\in (a,b]$ if there exists
an element $A\in \mathcal{K}$ such that%
\begin{equation}
\underset{h\rightarrow 0^{+}}{\lim }\frac{1}{h}\mathcal{H}%
(F(t_{0}),F(t_{0}-h)+hA)=0\text{ }  \label{der_11}
\end{equation}%
or 
\begin{equation}
\underset{h\rightarrow 0^{+}}{\lim }\frac{1}{h}\mathcal{H}%
(F(t_{0}-h),F(t_{0})-hA)=0.  \label{der_12}
\end{equation}%
The element $A\in \mathcal{K}$ is called a \emph{left derivative} of $F$ at $%
t_{0}$ and it will be denoted by $F_{-}^{\prime }(t_{0})$. $F$ is said to be 
\emph{left differentiable} on $(a,b]$, if $F$ is left differentiable at each 
$t_{0}\in (a,b]$.\vskip0.3cm

\begin{remark}
\label{rem-1} Let $F:[a,b]\rightarrow \mathcal{K}$ be a given function. If
it exists, the left derivative of $F$ at a point $t_{0}\in (a,b]$\ is
unique. Indeed, suppose that $A(t_{0}),B(t_{0})$ are left derivatives of $F$
at $t_{0}\in (a,b]$. Then from the properties of the metric $\mathcal{H}$
and (\ref{der_11}) or (\ref{der_12}) it follows that%
\begin{eqnarray*}
\mathcal{H}(A(t_{0}),B(t_{0})) &=&\frac{1}{h}\mathcal{H}(hA(t_{0}),hA(t_{0}))
\\
&=&\frac{1}{h}\mathcal{H}(F(t_{0}-h)+hA(t_{0}),F(t_{0}-h)+hB(t_{0})) \\
&\leq &\frac{1}{h}\mathcal{H}(F(t_{0}-h)+hA(t_{0}),F(t_{0})) \\
&&+\frac{1}{h}\mathcal{H}(F(t_{0}-h)+hB(t_{0}),F(t_{0})) \\
&\rightarrow &0\text{ as }h\rightarrow 0^{+}
\end{eqnarray*}%
or%
\begin{eqnarray*}
\mathcal{H}(A(t_{0}),B(t_{0})) &=&\frac{1}{h}\mathcal{H}%
(h(-A(t_{0})),h(-A(t_{0}))) \\
&=&\frac{1}{h}\mathcal{H}(F(t_{0}-h)-hA(t_{0}),F(t_{0}-h)-hB(t_{0})) \\
&\leq &\frac{1}{h}\mathcal{H}(F(t_{0}-h)-hA(t_{0}),F(t_{0})) \\
&&+\frac{1}{h}\mathcal{H}(F(t_{0}-h)-hB(t_{0}),F(t_{0})) \\
&\rightarrow &0\text{ as }h\rightarrow 0^{+}\text{,}
\end{eqnarray*}%
respectively. Therefore, $\mathcal{H}(A(t_{0}),B(t_{0}))=0$ and so $%
A(t_{0})=B(t_{0})$. Also, we remark that the conditions (\ref{der_11}) and (%
\ref{der_12}) may not be equivalent as we see in the following
example.\smallskip 
\end{remark}

\begin{example}
\label{ex-3} Consider the function $F:\mathbb{R}\rightarrow \mathcal{K}$
given by $F(t)=[0,|t|]$. For $A=[a^{-},a^{+}]\in \mathcal{K}$, we have that 
\begin{equation*}
\begin{tabular}{l}
$\underset{h\rightarrow 0^{+}}{\lim }\frac{1}{h}\mathcal{H}(F(0-h),F(0)-hA)=%
\underset{h\rightarrow 0^{+}}{\lim }\frac{1}{h}\mathcal{H}%
([0,h],[-ha^{+},-ha^{-}]$ \\ 
\\ 
$=\underset{h\rightarrow 0^{+}}{\lim }\frac{1}{h}\max \left\{
|ha^{+}|,|h+ha^{-}|\right\} =\max \left\{ |a^{+}|,|1+a^{-}|\right\} =0$%
\end{tabular}%
\end{equation*}%
only if $a^{-}=-1<a^{+}=0$. It follows that $\ $(\ref{der_12}) holds.
Therefore, $F$ is left differentiable at $t_{0}=0$ and $F_{-}^{\prime
}(0)=[-1,0]$. Since $\underset{h\rightarrow 0^{+}}{\lim }\frac{1}{h}\mathcal{%
H}(F(0),F(0-h)+hA)=0$ only if $a^{-}=0>a^{+}=-1$, then there exists a
contradiction with the assumption that $A=[a^{-},a^{+}]\in \mathcal{K}$, and
so (\ref{der_11}) does not hold.\vskip0.3cm
\end{example}

\noindent We say that $F$ is \emph{right differentiable} at $t_{0}\in
\lbrack a,b)$ if there exists an element $A\in \mathcal{K}$ such that%
\begin{equation}
\underset{h\rightarrow 0^{+}}{\lim }\frac{1}{h}\mathcal{H}%
(F(t_{0}+h),F(t_{0})+hA)=0  \label{der_21}
\end{equation}%
or 
\begin{equation}
\underset{h\rightarrow 0^{+}}{\lim }\frac{1}{h}\mathcal{H}%
(F(t_{0}),F(t_{0}+h)-hA)=0.  \label{der_22}
\end{equation}%
The element $A\in \mathcal{K}$ is called a \emph{right derivative} of $F$ at 
$t_{0}$ and it will be denoted by $F_{+}^{\prime }(t_{0})$. $F$ is said to
be \emph{right differentiable} on $[a,b)$, if $F$ is right differentiable at
each $t_{0}\in \lbrack a,b)$. \vskip0.3cm

\begin{remark}
\label{rem-2} Using the same reasoning as in Remark \ref{rem-1}, we can show
that if it exists, the right derivative of $F$ at a point $t_{0}\in \lbrack
a,b)$\ is unique. Also, we remark that the conditions (\ref{der_21}) and (%
\ref{der_22}) may not be equivalent as we see in the following example.\vskip%
0.3cm
\end{remark}

\begin{example}
\label{ex-4} Consider the function $F:\mathbb{R}\rightarrow \mathcal{K}$
given by $F(t)=[0,|t|]$. For $A=[a^{-},a^{+}]\in \mathcal{K}$, we have that%
\begin{equation*}
\begin{tabular}{l}
$\underset{h\rightarrow 0^{+}}{\lim }\frac{1}{h}\mathcal{H}(F(0+h),F(0)+hA)=%
\underset{h\rightarrow 0^{+}}{\lim }\frac{1}{h}\mathcal{H}%
([0,h],[ha^{-},ha^{+}]$ \\ 
\\ 
$=\underset{h\rightarrow 0^{+}}{\lim }\frac{1}{h}\max \left\{
|ha^{+}|,|h-ha^{-}|\right\} =\max \left\{ |a^{+}|,|1-a^{-}|\right\} =0$%
\end{tabular}%
\end{equation*}%
only if $a^{-}=0<a^{+}=1$. It follows that $\ $(\ref{der_21}) holds.
Therefore, $F$ is right differentiable at $t_{0}=0$ and $F_{+}^{\prime }(0)=%
\left[ 0,1\right] $. Since $\underset{h\rightarrow 0^{+}}{\lim }\frac{1}{h}%
\mathcal{H}(F(0),F(0+h)-hA)=0$ only if $a^{-}=1>a^{+}=0$, then there exists
a contradiction with the assumption that $A=[a^{-},a^{+}]\in \mathcal{K}$,
and so (\ref{der_22}) does not hold.\vskip0.3cm
\end{example}

\noindent We say that $F$ is \emph{differentiable} at $t_{0}\in \lbrack a,b]$
if $F$ is left and right differentiable at $t_{0}$, and $F_{-}^{\prime
}(t_{0})=F_{+}^{\prime }(t_{0})$. The element $F_{-}^{\prime }(t_{0})$ or $%
F_{+}^{\prime }(t_{0})$ will be denoted by $F^{\prime }(t_{0})$ and it is
called a \emph{derivative} of $F$ at $t_{0}$. $F$ is said to be\emph{\
differentiable} on $[a,b]$, if $F$ is differentiable at each $t_{0}\in
\lbrack a,b]$. At the end points of $[a,b]$, we consider only the one-side
derivatives.\vskip0.3cm

\begin{remark}
\label{rem-3} From Remarks \ref{rem-1} and \ref{rem-2}, it is clear if it
exists, the derivative of $F$ at a point $t_{0}\in \lbrack a,b]$\ is unique.%
\vskip0.3cm
\end{remark}

\begin{example}
\label{ex-5} Let $F:\mathbb{R}\rightarrow \mathcal{K}$ be the function given
by $F(t)=[0,|t|]$. From Examples \ref{ex-4} and \ref{ex-5} we have that $%
F_{-}^{\prime }(0)\neq F_{+}^{\prime }(0)$, and so $F$ is not differentiable
at $t_{0}=0$. \vskip0.3cm
\end{example}

\begin{theorem}
\label{th-2} \textit{If }$F:[a,b]\rightarrow \mathcal{K}$ \textit{is left
(right) differentiable at }$t_{0}\in (a,b]$ \textit{(}$t_{0}\in \lbrack a,b)$%
\textit{), then }$F$ \textit{is left (right) continuous at }$t_{0}$\textit{.
In particular, if }$F$ \textit{is differentiable at }$t_{0}$\textit{, then }$%
F$ \textit{is continuous at }$t_{0}$\textit{.}\vskip0.3cm
\end{theorem}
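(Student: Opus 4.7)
The plan is to reduce both one-sided continuity statements to a triangle-inequality computation against the error term in the definition of the one-sided derivative, exploiting the translation-invariance of $\mathcal{H}$ from Proposition \ref{p-1}(i). I will write out the argument for left differentiability; the right case is entirely symmetric, and the two-sided conclusion follows by combining them.

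Assume $F$ is left differentiable at $t_0\in(a,b]$ with left derivative $A=F_{-}^{\prime}(t_0)$. By definition, either (\ref{der_11}) or (\ref{der_12}) holds. In the first case, I would insert the intermediate point $F(t_0-h)+hA$ and apply Proposition \ref{p-1}(iii) to obtain
\[
\mathcal{H}(F(t_0),F(t_0-h))\le \mathcal{H}(F(t_0),F(t_0-h)+hA)+\mathcal{H}(F(t_0-h)+hA,F(t_0-h)).
\]
The first term is of the form $h\,\varepsilon(h)$ with $\varepsilon(h)\to 0$ by hypothesis, and by (i) together with (ii) the second term equals $\mathcal{H}(hA,\{0\})=h\Vert A\Vert$. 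Letting $h\to 0^{+}$ gives left continuity. In the second case, insert instead the intermediate point $F(t_0)-hA$:
\[
\mathcal{H}(F(t_0-h),F(t_0))\le \mathcal{H}(F(t_0-h),F(t_0)-hA)+\mathcal{H}(F(t_0)-hA,F(t_0)),
\]
where the first summand is $h\,\varepsilon(h)$ by hypothesis and the second is again $h\Vert A\Vert$ by (i) and (ii), so again letting $h\to 0^{+}$ gives left continuity.

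The right-differentiable case is handled identically: in (\ref{der_21}) insert $F(t_0)+hA$ between $F(t_0+h)$ and $F(t_0)$, and in (\ref{der_22}) insert $F(t_0+h)-hA$ between $F(t_0)$ and $F(t_0+h)$, running the same two-term estimate. Finally, if $F$ is differentiable at $t_0$, combining the left- and right-continuity conclusions yields continuity at $t_0$.

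The only delicate point is bookkeeping: as Examples \ref{ex-3} and \ref{ex-4} demonstrate, the two clauses in the definition of the one-sided derivative are genuinely inequivalent, so each disjunct must be treated separately. There is no real analytic obstacle, however, because in both clauses the same insertion-of-an-intermediate-term trick reduces the continuity modulus to the sum of the differentiability error and the trivially small quantity $h\Vert A\Vert$.
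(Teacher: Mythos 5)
Your proposal is correct and follows essentially the same route as the paper: a triangle inequality inserting an intermediate point, with the translation-invariance and homogeneity properties of $\mathcal{H}$ reducing the second term to $h\Vert A\Vert$, treating the two inequivalent clauses of the definition separately. (If anything, your choice of intermediate point in the first clause, namely $F(t_0-h)+hA$, matches the hypothesis (\ref{der_11}) more cleanly than the paper's own display.)
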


\begin{proof}
Suppose that $F$ is left differentiable at $t_{0}$ and $F_{-}^{\prime
}(t_{0})=A$ and let $\varepsilon >0$. Then from (\ref{der_11}) or (\ref%
{der_12}) it follows that there exists a $\delta >0$ such that for all $h\in
(0,\delta )$ we have%
\begin{eqnarray*}
\mathcal{H}(F(t_{0}-h),F(t_{0})) &\leq &\mathcal{H}(F(t_{0}-h),F(t_{0})+hA)+%
\mathcal{H}(F(t_{0})+hA,F(t_{0})) \\
&=&\mathcal{H}(F(t_{0}-h),F(t_{0})+hA)+\mathcal{H}(hA,\theta ) \\
&\leq &\varepsilon h+h\mathcal{H}(A,\theta )
\end{eqnarray*}%
or%
\begin{eqnarray*}
\mathcal{H}(F(t_{0}-h),F(t_{0})) &\leq &\mathcal{H}(F(t_{0}-h),F(t_{0})-hA)+%
\mathcal{H}(F(t_{0})-hA,F(t_{0})) \\
&=&\mathcal{H}(F(t_{0}-h),F(t_{0})-hA)+\mathcal{H}(h(-A),\theta ) \\
&\leq &\varepsilon h+h\mathcal{H}(-A,\theta ),
\end{eqnarray*}%
respectively. Therefore, $\underset{h\rightarrow 0^{+}}{\lim }\mathcal{H}%
(F(t_{0}-h),F(t_{0}))=0$, and so $F$ is left continuous at $t_{0}$. The
proof is similar when it is assumed that $F$ is right differentiable at $%
t_{0}$. 
\end{proof}

\vskip0.3cm

\begin{remark}
\label{rem-4} From the definition it follows that a function $%
F:[a,b]\rightarrow \mathcal{K}$ is differentiable at $t_{0}\in \lbrack a,b]$
if there exists an $A\in \mathcal{K}$ such that\ one of the following
conditions is true%
\begin{equation}
\underset{h\rightarrow 0^{+}}{\lim }\frac{1}{h}\mathcal{H}%
(F(t_{0}+h),F(t_{0})+hA)=\underset{h\rightarrow 0^{+}}{\lim }\frac{1}{h}%
\mathcal{H}(F(t_{0}),F(t_{0}-h)+hA)=0,  \label{der_1}
\end{equation}%
\begin{equation}
\underset{h\rightarrow 0^{+}}{\lim }\frac{1}{h}\mathcal{H}%
(F(t_{0}),F(t_{0}+h)-hA)=\underset{h\rightarrow 0^{+}}{\lim }\frac{1}{h}%
\mathcal{H}(F(t_{0}-h),F(t_{0})-hA)=0,  \label{der_2}
\end{equation}%
\begin{equation}
\underset{h\rightarrow 0^{+}}{\lim }\frac{1}{h}\mathcal{H}%
(F(t_{0}),F(t_{0}+h)-hA)=\underset{h\rightarrow 0^{+}}{\lim }\frac{1}{h}%
\mathcal{H}(F(t_{0}),F(t_{0}-h)+hA)=0,  \label{der_3}
\end{equation}%
\begin{equation}
\underset{h\rightarrow 0^{+}}{\lim }\frac{1}{h}\mathcal{H}%
(F(t_{0}+h),F(t_{0})+hA)=\underset{h\rightarrow 0^{+}}{\lim }\frac{1}{h}%
\mathcal{H}(F(t_{0}-h),F(t_{0})-hA)=0.  \label{der_4}
\end{equation}%
The element $A\in \mathcal{K}$ is the derivative of $F$ at $t_{0}$; that is, 
$A=F^{\prime }(t_{0})$. 
\end{remark}

\vskip0.3cm

\begin{example}
\label{ex-6} Consider the function $F:\mathbb{R}\rightarrow \mathcal{K}$
given by $F(t)=[-t^{2},t^{2}]$. For $A=[a^{-},a^{+}]\in \mathcal{K}$, we
have that 
\begin{equation*}
\begin{tabular}{l}
$\underset{h\rightarrow 0^{+}}{\lim }\frac{1}{h}\mathcal{H}(F(t),F(t-h)+hA)=$
\\ 
\\ 
$=\underset{h\rightarrow 0^{+}}{\lim }\frac{1}{h}\max
\{|2th-h^{2}+ha^{-}|,|-2th+h^{2}+ha^{+}|\}$ \\ 
\\ 
$=\max \left\{ |2t+a^{-}|,|-2t+a^{+}|\right\} =0$%
\end{tabular}%
\end{equation*}%
only if $a^{-}=-2t\leq a^{+}=2t$ and $t\geq 0$. Also, we have that%
\begin{equation*}
\begin{tabular}{l}
$\underset{h\rightarrow 0^{+}}{\lim }\frac{1}{h}\mathcal{H}(F(t-h),F(t)-hA)=$
\\ 
\\ 
$=\underset{h\rightarrow 0^{+}}{\lim }\frac{1}{h}\max
\{|2th-h^{2}+ha^{+}|,|-2th+h^{2}+ha^{-}|\}$ \\ 
\\ 
$=\max \{|2t+a^{+}|,|-2t+a^{-}|\}=0$%
\end{tabular}%
\end{equation*}%
only if $a^{-}=2t<a^{+}=-2t$ and $t<0$. It follows that $F$ is left
differentiable at $t\in \mathbb{R}$ and%
\begin{equation*}
F_{-}^{\prime }(t)=\left\{ 
\begin{tabular}{ll}
$\lbrack -2t,2t]$ & if $t\geq 0$ \\ 
$\lbrack 2t,-2t]$ & if $t<0.$%
\end{tabular}%
\right. 
\end{equation*}%
Further, we have that%
\begin{equation*}
\begin{tabular}{l}
$\underset{h\rightarrow 0^{+}}{\lim }\frac{1}{h}\mathcal{H}(F(t+h),F(t)+hA)=$
\\ 
\\ 
$=\underset{h\rightarrow 0^{+}}{\lim }\frac{1}{h}\max
\{|-2th-h^{2}-ha^{-}|,|2th+h^{2}-ha^{+}|\}$ \\ 
\\ 
$=\max \left\{ |-2t-a^{-}|,|2t-a^{+}|\right\} =0$%
\end{tabular}%
\end{equation*}%
only if $a^{-}=-2t\leq a^{+}=2t$ and $t\geq 0$. Also, we have that%
\begin{equation*}
\begin{tabular}{l}
$\underset{h\rightarrow 0^{+}}{\lim }\frac{1}{h}\mathcal{H}_{\mathcal{I}%
}(F(t),F(t+h)-hA)=$ \\ 
\\ 
$=\underset{h\rightarrow 0^{+}}{\lim }\frac{1}{h}\max
\{|-2th-h^{2}-ha^{+}|,|2th+h^{2}-ha^{-}|\}$ \\ 
\\ 
$=\max \{|-2t-a^{+}|,|2t-a^{-}|\}=0$%
\end{tabular}%
\end{equation*}%
only if $a^{-}=2t<a^{+}=-2t$ and $t<0$. Therefore, $F$ is right
differentiable at each $t\in \mathbb{R}$ and%
\begin{equation*}
F_{+}^{\prime }(t)=\left\{ 
\begin{tabular}{ll}
$\lbrack -2t,2t]$ & if $t\geq 0$ \\ 
$\lbrack 2t,-2t]$ & if $t<0.$%
\end{tabular}%
\right. 
\end{equation*}%
Since $F_{+}^{\prime }(t)=F_{-}^{\prime }(t)$ for all $t\in \mathbb{R}$, it
follows that $F$ is differentiable at each $t\in \mathbb{R}$. We remark that%
\begin{equation*}
\underset{h\rightarrow 0^{+}}{\lim }\frac{1}{h}\mathcal{H}(F(t+h),F(t)+hA)=%
\underset{h\rightarrow 0^{+}}{\lim }\frac{1}{h}\mathcal{H}(F(t),F(t-h)+hA)=0
\end{equation*}%
for $A=[-2t,2t]$ and $t>0$; that is, (\ref{der_1}) holds for each $t>0$, but
(\ref{der_2})-(\ref{der_4}) do not hold for $t>0$. Also,%
\begin{equation*}
\underset{h\rightarrow 0^{+}}{\lim }\frac{1}{h}\mathcal{H}_{\mathcal{I}%
}(F(t),F(t+h)-hA)=\underset{h\rightarrow 0^{+}}{\lim }\frac{1}{h}\mathcal{H}%
_{\mathcal{I}}(F(t-h),F(t)-hA)=0
\end{equation*}%
for $A=[2t,-2t]$ and $t<0$; that is, (\ref{der_4}) holds for each $t>0$, but
(\ref{der_1})-(\ref{der_3}) do not hold for $t>0$. If $t=0$, then it is easy
to check that $F$ is differentiable at $t=0$, $F^{\prime }(0)=\{0\}$, and (%
\ref{der_1})-(\ref{der_4}) are equivalent for $A=\{0\}$. \vskip0.3cm
\end{example}

\begin{remark}
\label{re-5} In \cite{lak1}, a function $F:[a,b]\rightarrow \mathcal{K}$ is
called differentiable at $t_{0}\in \lbrack a,b]$ if there exists an element $%
A\in \mathcal{F}$ such that%
\begin{equation*}
\underset{h\rightarrow 0^{+}}{\lim }\frac{1}{h}\mathcal{H}%
(F(t_{0}+h),F(t_{0})+hA)=0.
\end{equation*}%
In this case, the element $A\in \mathcal{K}$ is called the derivative of $F$
at $t_{0}$. In \cite{stef1} it is shown that the function $F:\mathbb{R}%
\rightarrow \mathcal{K}$ given by $F(t)=[e^{-t},2e^{-t}]$ is not
differentiable in this sense since, for a $t_{0}\in \mathbb{R}$ and $%
A=[a^{-},a^{+}]\in \mathcal{K}$, we have that $\underset{h\rightarrow 0^{+}}{%
\lim }\frac{1}{h}\mathcal{H}_{\mathcal{I}}(F(t_{0}+h),F(t_{0})+hA)=0$ only
if $a^{-}=-e^{-t_{0}}>a^{+}=-2e^{-t_{0}}$ which is a contradiction with the
assumption that $A=[a^{-},a^{+}]\in \mathcal{K}$. However, for $t_{0}\in 
\mathbb{R}$ and $A=[-2e^{-t_{0}},-e^{-t_{0}}]\in \mathcal{K}$ we have that 
\begin{equation*}
\begin{tabular}{l}
$\underset{h\rightarrow 0^{+}}{\lim }\frac{1}{h}\mathcal{H}%
(F(t_{0}),F(t_{0}+h)-hA)=$ \\ 
\\ 
$=\underset{h\rightarrow 0^{+}}{\lim }\frac{1}{h}\mathcal{H}%
([e^{-t_{0}},2e^{-t_{0}}],[e^{-t_{0}-h},2e^{-t_{0}-h}]-h[-2e^{-t_{0}},-e^{-t_{0}}])
$ \\ 
\\ 
$=\underset{h\rightarrow 0^{+}}{\lim }\frac{1}{h}\mathcal{H}%
([e^{-t_{0}},2e^{-t_{0}}],[e^{-t_{0}-h}+he^{-t_{0}},2e^{-t_{0}-h}+2he^{-t_{0}}])
$ \\ 
\\ 
$=\underset{h\rightarrow 0^{+}}{\lim }\frac{1}{h}\max \left\{
|e^{-t_{0}-h}-e^{-t_{0}}+he^{-t_{0}}|,2|e^{-t_{0}-h}-e^{-t_{0}}+he^{-t_{0}}|%
\right\} $ \\ 
\\ 
$=\underset{h\rightarrow 0^{+}}{\lim }2|e^{-t_{0}}\frac{e^{-h}-1}{h}%
+e^{-t_{0}}|=0$%
\end{tabular}%
\end{equation*}%
and 
\begin{equation*}
\begin{tabular}{l}
$\underset{h\rightarrow 0^{+}}{\lim }\frac{1}{h}\mathcal{H}%
(F(t_{0}-h),F(t_{0})-hA)=$ \\ 
\\ 
$=\underset{h\rightarrow 0^{+}}{\lim }\frac{1}{h}\mathcal{H}%
([e^{-t_{0}+h},2e^{-t_{0}+h}],[e^{-t_{0}},2e^{-t_{0}}]-h[-2e^{-t_{0}},-e^{-t_{0}}])
$ \\ 
\\ 
$=\underset{h\rightarrow 0^{+}}{\lim }\frac{1}{h}\mathcal{H}%
([e^{-t_{0}+h},2e^{-t_{0}+h}],[e^{-t_{0}}+he^{-t_{0}},2e^{-t_{0}}+2he^{-t_{0}}])
$ \\ 
\\ 
$=\underset{h\rightarrow 0^{+}}{\lim }\frac{1}{h}\max \left\{
|e^{-t_{0}+h}-e^{-t_{0}}-he^{-t_{0}}|,2|e^{-t_{0}+h}-e^{-t_{0}}-he^{-t_{0}}|%
\right\} $ \\ 
\\ 
$=\underset{h\rightarrow 0^{+}}{\lim }2|e^{-t_{0}}\frac{e^{h}-1}{h}%
-e^{-t_{0}}|=0.$%
\end{tabular}%
\end{equation*}%
It follows that%
\begin{equation*}
\underset{h\rightarrow 0^{+}}{\lim }\frac{1}{h}\mathcal{H}%
(F(t_{0}),F(t_{0}+h)-hA)=\underset{h\rightarrow 0^{+}}{\lim }\frac{1}{h}%
\mathcal{H}(F(t_{0}-h),F(t_{0})-hA)=0.
\end{equation*}%
Therefore, (\ref{der_2}) holds and so $F$ is differentiable at $t_{0}$ with $%
F^{\prime }(t_{0})=\left[ -2e^{-t_{0}},-e^{-t_{0}}\right] $.\vskip0.3cm
\end{remark}

\begin{theorem}
\label{th-3} \textit{Let }$F:[a,b]\rightarrow \mathcal{K}$ \textit{be a
given function. If there exists an }$A\in \mathcal{K}$\textit{\ such that\ (%
\ref{der_11}) and (\ref{der_12}) or (\ref{der_21}) and (\ref{der_22}) occur
simultaneously, then} $A$ is a singleton.\vskip0.3cm
\end{theorem}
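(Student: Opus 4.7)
The plan is to compare the widths of the intervals involved and extract a contradiction unless $w(A) = 0$. The key observation is that the width function $w : \mathcal{K} \to [0,\infty)$ defined by $w([b^-,b^+]) = b^+ - b^-$ is additive with respect to Minkowski sum ($w(B+C) = w(B) + w(C)$), invariant under the opposite ($w(-B) = w(B)$, so in particular $w(-hA) = h \cdot w(A)$ for $h > 0$), and $2$-Lipschitz with respect to the Hausdorff-Pompeiu metric, since
\begin{equation*}
|w(B) - w(C)| = |(b^+ - c^+) - (b^- - c^-)| \leq 2\mathcal{H}(B,C).
\end{equation*}

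First I would assume that $F$ satisfies both (\ref{der_11}) and (\ref{der_12}) at $t_0$ with the same $A$. Applying the $2$-Lipschitz bound to (\ref{der_11}) and using $w(F(t_0-h)+hA) = w(F(t_0-h)) + h\cdot w(A)$, I would divide by $h$ to obtain
\begin{equation*}
\left| \frac{w(F(t_0)) - w(F(t_0-h))}{h} - w(A) \right| \leq \frac{2}{h}\,\mathcal{H}(F(t_0), F(t_0-h)+hA) \longrightarrow 0.
\end{equation*}
Next I would apply the same estimate to (\ref{der_12}), now using $w(F(t_0)-hA) = w(F(t_0)) + h \cdot w(A)$, to get
\begin{equation*}
\left| \frac{w(F(t_0-h)) - w(F(t_0))}{h} - w(A) \right| \leq \frac{2}{h}\,\mathcal{H}(F(t_0-h), F(t_0)-hA) \longrightarrow 0.
\end{equation*}

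Adding the two limit identities cancels the difference quotients of $w \circ F$ and yields $0 = 2w(A)$, hence $w(A) = 0$, i.e.\ $A$ is a singleton. The argument for the right-differentiable case (i.e.\ (\ref{der_21}) together with (\ref{der_22})) is completely symmetric: from (\ref{der_21}) one extracts $(w(F(t_0+h)) - w(F(t_0)))/h \to w(A)$, from (\ref{der_22}) one extracts $(w(F(t_0)) - w(F(t_0+h)))/h \to w(A)$, and summing gives $w(A)=0$ again.

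There is no real obstacle to overcome here; the only delicate point is verifying that $w$ behaves well under the operations that appear (additivity on Minkowski sums and invariance under the opposite), so that $w$ of the two ``target'' intervals $F(t_0-h)+hA$ and $F(t_0)-hA$ both pick up the same positive term $h \cdot w(A)$. This asymmetry is precisely what forces $w(A)$ to vanish when the two conditions are imposed simultaneously, and it also clarifies why Examples \ref{ex-3}, \ref{ex-4} and \ref{ex-6} show non-trivial intervals can occur only when exactly one of the two options holds.
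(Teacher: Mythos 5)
Your proof is correct, and it takes a genuinely different route from the paper's. The paper stays entirely inside the metric structure: it picks the intermediate point $F(t_{0}+h)-hA$, applies the triangle inequality to $\frac{1}{h}\mathcal{H}(F(t_{0})+hA-hA,\,F(t_{0}))$, and uses the translation invariance $\mathcal{H}(B+C,D+C)=\mathcal{H}(B,D)$ to identify the two resulting terms with exactly the expressions appearing in (\ref{der_21}) and (\ref{der_22}); this gives $\mathcal{H}(A-A,\theta)=0$, hence $A-A=\theta$ and $A$ is a singleton. You instead push everything down to $\mathbb{R}$ through the width functional $w$, using its Minkowski additivity, its invariance under the opposite, and the $2$-Lipschitz bound $|w(B)-w(C)|\le 2\mathcal{H}(B,C)$, and then cancel the difference quotients of $w\circ F$. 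The two arguments ultimately measure the same quantity, since $\mathcal{H}(A-A,\theta)=\max\{|a^{-}-a^{+}|,|a^{+}-a^{-}|\}=w(A)$; the difference is in the machinery. Your version is more transparent and explains \emph{why} the result holds (the two one-sided conditions force the width of $F$ to be simultaneously increasing and decreasing at rate $w(A)$, which also illuminates Examples \ref{ex-3}, \ref{ex-4} and \ref{ex-6}). The paper's version has the advantage of using only metric and quasilinear-space properties, so it transfers verbatim to compact convex sets in $\mathbb{R}^{n}$ and to fuzzy sets --- the settings the conclusion explicitly targets --- where a single scalar width is no longer available, whereas your argument would there need to be rerun with a family of width-like functionals (e.g.\ differences of support functions in each direction). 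Both proofs are complete and correct as written.
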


\begin{proof}
Suppose that (\ref{der_21}) and (\ref{der_22}) simultaneously hold. Since%
\begin{equation*}
\begin{tabular}{l}
$\mathcal{H}(A-A,\theta )=\frac{1}{h}\mathcal{H}(hA-hA,\theta )=\frac{1}{h}%
\mathcal{H}(F(t_{0})+hA-hA,F(t_{0}))$ \\ 
\\ 
$\leq \frac{1}{h}\mathcal{H}(F(t_{0})+hA-hA,F(t_{0}+h)-hA)+\frac{1}{h}%
\mathcal{H}(F(t_{0}+h)-hA,F(t_{0}))$ \\ 
\\ 
$=\frac{1}{h}\mathcal{H}(F(t_{0})+hA,F(t_{0}+h))+\frac{1}{h}\mathcal{H}%
(F(t_{0}),F(t_{0}+h)-hA)$ \\ 
\\ 
$\rightarrow 0$ as $h\rightarrow 0^{+},$%
\end{tabular}%
\end{equation*}%
it follows that $\mathcal{H}(A-A,\theta )=0$. Therefore, $A-A=\theta $ and
so $A\ $is a singleton. A similar proof establishes the result if (\ref%
{der_11}) and (\ref{der_12}) simultaneously hold\textit{.}
\end{proof}

\vskip0.3cm

\begin{corollary}
\label{cor-2} \textit{If for a given function }$F:[a,b]\rightarrow \mathcal{K%
}$ \textit{and }$t_{0}\in \lbrack a,b]$\textit{\ there exists an }$A\in 
\mathcal{K}$\textit{\ such that at least two from the conditions (\ref{der_1}%
)-(\ref{der_4}) occur simultaneously, then }$A\ $is a singleton.\vskip0.3cm
\end{corollary}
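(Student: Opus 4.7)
The plan is to reduce Corollary \ref{cor-2} directly to Theorem \ref{th-3} by a short pigeonhole argument on the four elementary limit conditions (\ref{der_11}), (\ref{der_12}), (\ref{der_21}), (\ref{der_22}).

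The key observation I would record first is that each of the four conditions in Remark \ref{rem-4} is literally the conjunction of exactly one right-type condition and exactly one left-type condition: condition (\ref{der_1}) equals $(\ref{der_21})\wedge(\ref{der_11})$, (\ref{der_2}) equals $(\ref{der_22})\wedge(\ref{der_12})$, (\ref{der_3}) equals $(\ref{der_22})\wedge(\ref{der_11})$, and (\ref{der_4}) equals $(\ref{der_21})\wedge(\ref{der_12})$. So the four compound conditions correspond bijectively to the four points of the product $\{(\ref{der_21}),(\ref{der_22})\}\times\{(\ref{der_11}),(\ref{der_12})\}$.

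Next I would run through the six unordered pairs of conditions from $\{(\ref{der_1}),(\ref{der_2}),(\ref{der_3}),(\ref{der_4})\}$. A quick inspection of the table above shows that in every pair, at least one coordinate is covered in both of its possible forms: for example, $\{(\ref{der_1}),(\ref{der_2})\}$ and $\{(\ref{der_3}),(\ref{der_4})\}$ realize both right-type and both left-type conditions; $\{(\ref{der_1}),(\ref{der_3})\}$ and $\{(\ref{der_2}),(\ref{der_4})\}$ realize both right-type conditions (\ref{der_21}) and (\ref{der_22}); and $\{(\ref{der_1}),(\ref{der_4})\}$ and $\{(\ref{der_2}),(\ref{der_3})\}$ realize both left-type conditions (\ref{der_11}) and (\ref{der_12}). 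Therefore, whenever two of (\ref{der_1})--(\ref{der_4}) hold simultaneously, the hypothesis of Theorem \ref{th-3} is met, and I invoke that theorem to conclude that $A$ is a singleton.

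The main obstacle is purely bookkeeping: identifying the decomposition of the four compound conditions and verifying the pigeonhole claim over the six pairs. There is no new limit computation to perform beyond those already carried out inside the proof of Theorem \ref{th-3}; the corollary is essentially a combinatorial packaging of that theorem.
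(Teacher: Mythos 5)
Your proposal is correct and matches the intended derivation: the paper states Corollary \ref{cor-2} without proof as an immediate consequence of Theorem \ref{th-3}, and the decomposition of (\ref{der_1})--(\ref{der_4}) into pairs from $\{(\ref{der_21}),(\ref{der_22})\}\times\{(\ref{der_11}),(\ref{der_12})\}$, followed by the observation that any two distinct such pairs force both left-type or both right-type conditions to hold, is exactly the bookkeeping needed to invoke that theorem.
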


\begin{proposition}
\label{p-9} \textit{If }$F:[a,b]\rightarrow \mathcal{K}$ \textit{is a given
function and }$t_{0}\in (a,b)$\textit{. Then the following statements are
true.}

(a)\textit{\ If there exists }$A\in \mathcal{K}$\textit{\ such that\ (\ref%
{der_1}) holds, then}%
\begin{equation}
\underset{h\rightarrow 0^{+}}{\lim }\frac{1}{h}\mathcal{H}%
(F(t_{0}+h),F(t_{0}-h)+2hA)=0.  \label{s1}
\end{equation}%
(b)\textit{\ If there exists }$A\in \mathcal{K}$\textit{\ such that\ (\ref%
{der_2}) holds, then}%
\begin{equation}
\underset{h\rightarrow 0^{+}}{\lim }\frac{1}{h}\mathcal{H}%
(F(t_{0}-h),F(t_{0}+h)-2hA)=0.  \label{s2}
\end{equation}%
(c)\textit{\ If there exists }$A\in \mathcal{K}$\textit{\ such that\ (\ref%
{der_3}) or (\ref{der_4}) holds, then}%
\begin{equation}
\underset{h\rightarrow 0^{+}}{\lim }\frac{1}{h}\mathcal{H}%
(F(t_{0}+h)-hA,F(t_{0}-h)+hA)=0.  \label{s3}
\end{equation}
\end{proposition}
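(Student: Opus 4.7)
The plan is to establish all three parts by the same simple idea: insert a carefully chosen intermediate element into the triangle inequality for $\mathcal{H}$, then invoke the translation invariance (property (i) of Proposition \ref{p-1}) to match each resulting summand with one of the vanishing quantities in the hypothesis. No use of the $gH$-difference is needed, only elementary properties of the metric.

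For (a), I would use $F(t_0)+hA$ as the intermediate element, which sits naturally between $F(t_0+h)$ and $F(t_0-h)+2hA=F(t_0-h)+hA+hA$. After the triangle inequality, property (i) strips the common summand $+hA$ from the second piece, reducing it to $\mathcal{H}(F(t_0),F(t_0-h)+hA)$. Dividing by $h$ and passing to the limit, both pieces vanish by (\ref{der_1}). Part (b) is the mirror image: the intermediate element is $F(t_0)-hA$, and one cancels the common summand $-hA$ in the second piece to recover the two quantities in (\ref{der_2}).

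For (c) under hypothesis (\ref{der_3}) the argument is direct: insert $F(t_0)$ between $F(t_0+h)-hA$ and $F(t_0-h)+hA$, and use the symmetry of $\mathcal{H}$ in the first summand to obtain exactly the two quantities in (\ref{der_3}).

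The only step I expect to require real care is (c) under (\ref{der_4}), because the hypotheses in (\ref{der_4}) involve $F(t_0)+hA$ and $F(t_0)-hA$ rather than $F(t_0)$ itself, and since Minkowski addition admits no additive inverse in $\mathcal{K}$ one cannot simply cancel $hA$ against $-hA$. The saving observation is that Minkowski addition is commutative, so $F(t_0)+hA-hA=F(t_0)-hA+hA$ as a single element of $\mathcal{K}$ (both equal $[f^{-}(t_0)-h(a^{+}-a^{-}),\,f^{+}(t_0)+h(a^{+}-a^{-})]$). This common set can serve as the intermediate point in the triangle inequality. Applying property (i) to cancel $-hA$ in the first summand and $+hA$ in the second then yields the bound $\mathcal{H}(F(t_0+h),F(t_0)+hA)+\mathcal{H}(F(t_0-h),F(t_0)-hA)$, whose $h^{-1}$-multiple tends to $0$ by (\ref{der_4}).
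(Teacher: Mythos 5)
Your proof is correct and follows essentially the same route as the paper's: in every case you arrive at exactly the same two-term bound (e.g.\ $\mathcal{H}(F(t_0+h),F(t_0)+hA)+\mathcal{H}(F(t_0),F(t_0-h)+hA)$ in part (a)), the paper obtaining it by adding $F(t_0)$ to both arguments and applying the subadditivity $\mathcal{H}(A+B,C+D)\le \mathcal{H}(A,C)+\mathcal{H}(B,D)$, while you use the triangle inequality through a chosen intermediate element followed by translation invariance --- two equivalent mechanisms. Your extra care in the (\ref{der_4}) case, exploiting commutativity of Minkowski addition to identify $F(t_0)+hA-hA$ with $F(t_0)-hA+hA$ instead of attempting an (impossible) additive cancellation, is exactly right.
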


\begin{proof}
Suppose there exists $A\in \mathcal{K}$\ such that\ (\ref{der_1}) holds.
Then we have%
\begin{eqnarray*}
&&\frac{1}{h}\mathcal{H}(F(t_{0}+h),F(t_{0}-h)+2hA) \\
&=&\frac{1}{h}\mathcal{H}(F(t_{0})+F(t_{0}+h),F(t_{0})+F(t_{0}-h)+2hA) \\
&\leq &\frac{1}{h}\mathcal{H}(F(t_{0}),F(t_{0}-h)+hA)+\frac{1}{h}\mathcal{H}%
(F(t_{0}+h),F(t_{0})+hA)\rightarrow 0
\end{eqnarray*}%
as $h\rightarrow 0^{+}$, and so (\ref{s1}) is true. With a similar reasoning
we can prove statement (b). Now, suppose that there exists $A\in \mathcal{K}$%
\ such that\ (\ref{der_3}) holds. Then we have%
\begin{eqnarray*}
&&\frac{1}{h}\mathcal{H}(F(t_{0}+h)-hA,F(t_{0}-h)+hA) \\
&=&\frac{1}{h}\mathcal{H}(F(t_{0})+F(t_{0}+h)-hA,F(t_{0})+F(t_{0}-h)+hA) \\
&\leq &\frac{1}{h}\mathcal{H}(F(t_{0}),F(t_{0}-h)+hA)+\frac{1}{h}\mathcal{H}%
(F(t_{0}),F(t_{0}+h)-hA)\rightarrow 0
\end{eqnarray*}%
as $h\rightarrow 0^{+}$, and so (\ref{s3}) is true. If there exists an $A\in 
\mathcal{K}$\ such that\ (\textit{\ref{der_4}}) holds, then we have%
\begin{eqnarray*}
&&\frac{1}{h}\mathcal{H}(F(t_{0}+h)-hA,F(t_{0}-h)+hA) \\
&=&\frac{1}{h}\mathcal{H}(F(t_{0})+F(t_{0}+h)-hA,F(t_{0})+F(t_{0}-h)+hA) \\
&\leq &\frac{1}{h}\mathcal{H}(F(t_{0})-hA,F(t_{0}-h))+\frac{1}{h}\mathcal{H}%
(F(t_{0}+h),F(t_{0})+hA)\rightarrow 0\text{ }
\end{eqnarray*}%
as $h\rightarrow 0^{+}$, and so (\ref{s3}) is again true. 
\end{proof}

\vskip0.3cm

\begin{theorem}
\label{th-4}\textit{\ Let }$F:[a,b]\rightarrow \mathcal{K}$ \textit{be a
given function. If there exists an }$A\in \mathcal{K}$\textit{\ such that\ (%
\ref{der_3}) or (\ref{der_4}) holds, then} $A\ $is a singleton.\vskip0.3cm
\end{theorem}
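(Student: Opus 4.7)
The target is to show $\mathcal{H}(A+(-A),\theta)=0$; since $A+(-A)=[a^{-}-a^{+},a^{+}-a^{-}]=[-w(A),w(A)]$, this Hausdorff distance equals $w(A)$, and it vanishes if and only if $A$ is a singleton. My plan is to mirror the strategy used in the proof of Theorem \ref{th-3}.

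First, properties (i) and (ii) of the Hausdorff--Pompeiu metric give, for every $h>0$,
\begin{equation*}
\mathcal{H}(A+(-A),\theta)=\tfrac{1}{h}\mathcal{H}(hA+h(-A),\theta)=\tfrac{1}{h}\mathcal{H}(F(t_{0})+hA+h(-A),F(t_{0})).
\end{equation*}
I then apply the triangle inequality with a midpoint $M_{h}$ chosen so that, after cancelling a common summand by (i), each of the two resulting distances matches a quantity known to be $o(h)$: either one of the hypotheses in (\ref{der_3}) (respectively (\ref{der_4})), or Proposition \ref{p-9}(c), which asserts $\mathcal{H}(F(t_{0}+h)-hA,F(t_{0}-h)+hA)=o(h)$.

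In case (\ref{der_3}), the natural try is $M_{h}=F(t_{0}-h)+hA$: cancelling $hA$ by (i) converts the first distance into $\mathcal{H}(F(t_{0})+h(-A),F(t_{0}-h))$, while the second, $\mathcal{H}(F(t_{0}-h)+hA,F(t_{0}))$, is precisely one of the hypotheses in (\ref{der_3}) and is $o(h)$. Case (\ref{der_4}) would be treated symmetrically with the Theorem \ref{th-3} midpoint $M_{h}=F(t_{0}+h)+h(-A)$: the first distance becomes $\mathcal{H}(F(t_{0})+hA,F(t_{0}+h))$, which is hypothesis (\ref{der_21}) of (\ref{der_4}), while the second is $\mathcal{H}(F(t_{0}+h)-hA,F(t_{0}))$.

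The main obstacle will be showing that the remaining ``crossed'' summand is also $o(h)$ after division by $h$: namely $\mathcal{H}(F(t_{0})+h(-A),F(t_{0}-h))/h\to 0$ in case (\ref{der_3}), and $\mathcal{H}(F(t_{0}+h)-hA,F(t_{0}))/h\to 0$ in case (\ref{der_4}). Neither is among the listed hypotheses. I would attempt a second triangle-inequality step, inserting an auxiliary midpoint (for instance $F(t_{0}+h)-hA$ in case (\ref{der_3})) so as to invoke Proposition \ref{p-9}(c) jointly with the remaining condition of the pair. The delicate point is to avoid a tautological estimate of the form $\mathcal{H}(A+(-A),\theta)\le\mathcal{H}(A+(-A),\theta)+o(1)$, which the naive decomposition tends to produce because the residuals in the intermediate inequalities are comparable to $h\,w(A)$. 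Navigating around this circularity is, I expect, the technical core of the proof.
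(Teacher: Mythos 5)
Your proposal is not yet a proof: it sets up the telescoping estimate and then stops at precisely the step that carries all the content of the theorem, namely showing that the ``crossed'' summand $\frac{1}{h}\mathcal{H}(F(t_{0})-hA,F(t_{0}-h))$ (which is exactly condition (\ref{der_12}), i.e.\ half of (\ref{der_4})) is $o(1)$. The circularity you anticipate is real and cannot be navigated around by inserting further midpoints or by invoking Proposition \ref{p-9}(c). Indeed, take $t_{0}=0$, $F(t)=[\,|t|-1,\;1-|t|\,]$ on $[-1,1]$ and $A=[-1,1]$. Then for $0<h<1$ one computes $F(h)-hA=[h-1,1-h]+[-h,h]=[-1,1]=F(0)$ and $F(-h)+hA=[h-1,1-h]+[-h,h]=F(0)$, so both limits in (\ref{der_3}) are $0$ (the expressions vanish identically), while $\frac{1}{h}\mathcal{H}(F(0)-hA,F(-h))=\frac{1}{h}\mathcal{H}([-1-h,1+h],[h-1,1-h])=2=w(A)$ for every $h$. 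So the quantity you need to be $o(1)$ equals $w(A)$ in this instance: estimating it is literally equivalent to the conclusion, and no decomposition will produce it from (\ref{der_3}) and (\ref{s3}) alone. (Since this example satisfies (\ref{der_3}) with a nondegenerate $A$, you should also check it against the statement of the theorem itself before investing more effort in this route.)

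The paper's own argument is structured quite differently and does not estimate $\mathcal{H}(A+(-A),\theta)$ at all. It uses Proposition \ref{p-9}(c) to obtain (\ref{s3}), applies the reverse triangle inequality to conclude that $\frac{1}{h}\mathcal{H}(F(t_{0}),F(t_{0}+h)-hA)$ and $\frac{1}{h}\mathcal{H}(F(t_{0}),F(t_{0}-h)+hA)$ have the same limit, argues from this (together with a substitution $t\mapsto t+h$ in (\ref{der_11})) that (\ref{der_4}) must also hold, and then invokes Corollary \ref{cor-2}: two of the four conditions holding simultaneously forces $A$ to be a singleton. So the intended reduction is ``(\ref{der_3}) implies (\ref{der_4}), hence Theorem \ref{th-3} applies,'' rather than a direct width estimate. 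If you wanted to follow that route, the step you would have to supply is the implication from (\ref{der_3}) to (\ref{der_12}) and (\ref{der_21}); but in the example above $\frac{1}{h}\mathcal{H}(F(-h),F(0)-hA)=2$ for all $h$, so that implication is exactly as problematic as your crossed term. In short, your instinct that the decomposition collapses into a tautology is correct, and the obstruction is not one of technique.
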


\begin{proof}
Suppose that (\ref{der_3}) holds. Then (\ref{s3}) is also true and it
follows that 
\begin{equation*}
\begin{tabular}{l}
$\left\vert \frac{1}{h}d(F(t_{0}),F(t_{0}+h)-hA)-\frac{1}{h}%
d(F(t_{0}),F(t_{0}-h)+hA)\right\vert $ \\ 
\\ 
$\leq \frac{1}{h}d(F(t_{0}+h)-hA,F(t_{0}-h)+hA)\rightarrow 0$ as $%
h\rightarrow 0^{+};$%
\end{tabular}%
\end{equation*}%
that is, 
\begin{equation}
\underset{h\rightarrow 0^{+}}{\lim }\frac{1}{h}d(F(t_{0}),F(t_{0}+h)-hA)=%
\underset{h\rightarrow 0^{+}}{\lim }\frac{1}{h}d(F(t_{0}),F(t_{0}-h)+hA).
\label{d-1}
\end{equation}%
On the other hand, if we take in (\ref{der_11}) $t=t+h$ we obtain $\underset{%
h\rightarrow 0^{+}}{\lim }\frac{1}{h}d(F(t_{0}),F(t_{0}+h)-hA)=0$. Then from
(\ref{d-1}) it follows that (\ref{der_4}) also holds. Therefore (\ref{der_3}%
) and (\ref{der_4}) occur simultaneously and thus by Corollary \ref{cor-2}
we infer that $A\ $is a singleton. A similar proof works if (\ref{der_4})
holds.
\end{proof}

\vskip0.3cm

\begin{remark}
\label{rem-6} From Theorem \ref{th-3} and Theorem \ref{th-4} it follows that
a function $F:[a,b]\rightarrow \mathcal{K}$ can be differentiable in the
sense of (\ref{der_1}) or in the sense of (\ref{der_2}). We will say that $F$
is $\mathcal{H}^{1}$-\emph{differentiable} if it is differentiable in the
sense of (\ref{der_1}). Also, we will say that $F$ is $\mathcal{H}^{2}$-%
\emph{differentiable} if it is differentiable in the sense of (\ref{der_2}%
).\smallskip 
\end{remark}

\vskip0.3cm

\begin{theorem}
\label{th-7}\textit{\ Let }$F,G:[a,b]\rightarrow \mathcal{K}$ \textit{be two
given function. }

\textit{(a) If }$F$\textit{\ is differentiable and }$\lambda \in \mathbb{R}$%
\textit{, then the function }$\lambda F$ \textit{is differentiable and }$%
(\lambda F)^{\prime }=\lambda F^{\prime }$\textit{. }

\textit{(b) If }$F,G\in \mathcal{H}^{i}$\textit{\ (}$i=1,2$) \textit{and }$%
F\ominus G$, $F^{\prime }\ominus G^{\prime }$\textit{\ exist, then }$F+G$%
\textit{, }$F\ominus G\in \mathcal{H}^{i}\mathit{\ }(i=1,2)$ \textit{and}%
\begin{equation*}
\begin{tabular}{l}
$(F+G)^{\prime }=F^{\prime }+G^{\prime },$ \\ 
\\ 
$(F\ominus G)^{\prime }=F^{\prime }\ominus G^{\prime }.$%
\end{tabular}%
\end{equation*}

\textit{(c) If }$F\in \mathcal{H}^{i}$\textit{, }$G\in \mathcal{H}^{j}$%
\textit{\ (}$i,j=1,2$\textit{) for }$i\neq j$\textit{\ and }$F\ominus G$, $%
F^{\prime }\ominus (-G^{\prime })$\textit{\ exist, then }$F+G$, $F\ominus
G\in \mathcal{H}^{i}$\textit{\ and}%
\begin{equation*}
\begin{tabular}{l}
$(F+G)^{\prime }=F^{\prime }\ominus (-G^{\prime }),$ \\ 
\\ 
$(F\ominus G)^{\prime }=F^{\prime }+(-G^{\prime }).$%
\end{tabular}%
\end{equation*}
\end{theorem}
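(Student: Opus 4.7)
The plan is to verify the three parts in turn, using the subadditive and linearity-type properties of $\mathcal{H}$ and $\ominus$ collected in Propositions \ref{p-1} and \ref{prop-3} to reduce each difference quotient for the combined function to a sum of the difference quotients that witness the differentiability of $F$ and $G$. For part (a), the key observation is that for any $\lambda\in\mathbb{R}$ and $h>0$ one has $\lambda(F(t_0)+hA)=\lambda F(t_0)+h(\lambda A)$, so by Proposition \ref{p-1}(ii) each of the four limit conditions (\ref{der_1})--(\ref{der_4}) for $F$ at $A$ scales, with factor $|\lambda|$, to the corresponding condition for $\lambda F$ at $\lambda A$; hence $\lambda F$ is differentiable and $(\lambda F)'=\lambda F'$.

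For part (b), treat the cases $i=1$ and $i=2$ in parallel. For the sum, Proposition \ref{p-1}(iii) bounds the $\mathcal{H}$-distance associated with $F+G$ by the sum of the two $\mathcal{H}$-distances associated with $F$ and $G$ individually, so dividing by $h$ and letting $h\to 0^+$ yields $(F+G)'=F'+G'$. For the $gH$-difference, first rewrite
\[
(F(t_0)\ominus G(t_0))+h\bigl(F'(t_0)\ominus G'(t_0)\bigr)=\bigl(F(t_0)+hF'(t_0)\bigr)\ominus\bigl(G(t_0)+hG'(t_0)\bigr),
\]
using Proposition \ref{prop-3}(f) together with the identity $h(X\ominus Y)=hX\ominus hY$ (routine for $h>0$ from (\ref{gH_1})); Proposition \ref{p-1}(vii) then bounds the resulting $\mathcal{H}$-distance by the same two quotients as in the sum case. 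The case $i=2$ is identical after swapping the signs and roles of the increments $hF'$ and $hG'$.

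For part (c), suppose without loss of generality that $F\in\mathcal{H}^1$ and $G\in\mathcal{H}^2$. The candidate derivatives $F'(t_0)\ominus(-G'(t_0))$ for $F+G$ and $F'(t_0)+(-G'(t_0))$ for $F\ominus G$ are engineered so that the sign mismatch between (\ref{der_1}) for $F$ and (\ref{der_2}) for $G$ is absorbed into an inner $\ominus$. To bound the $\mathcal{H}$-distance associated with $F+G$, apply Proposition \ref{p-1}(viii) whose six arguments I take to be $F(t_0+h)$, $G(t_0+h)$, $F(t_0)$, $G(t_0)$, $hF'(t_0)$, $-hG'(t_0)$; the right-hand side then becomes the sum of an $\mathcal{H}^1$-type quotient for $F$ and an $\mathcal{H}^2$-type quotient for $G$, each vanishing after division by $h$. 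For $F\ominus G$, the analogous application of Proposition \ref{p-1}(ix) handles both limits in (\ref{der_1}). The main obstacle is the combinatorial matching in this step: one must carefully pair the arguments of (viii) and (ix) so that exactly the $\mathcal{H}^1$- and $\mathcal{H}^2$-type quotients appear on the right with no leftover terms; once the correct pairing is identified, the remaining estimates are routine.
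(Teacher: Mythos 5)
Your proposal follows essentially the same route as the paper: for (b) it uses Proposition \ref{p-1}(iii) and (vii) together with Proposition \ref{prop-3}(f) and the identity $h(X\ominus Y)=hX\ominus hY$, and for (c) it uses Proposition \ref{p-1}(viii) and (ix) with exactly the argument pairing you describe, which I checked does produce the two vanishing quotients with no leftover terms. The one caveat is that in (c) the case $F\in\mathcal{H}^{2}$, $G\in\mathcal{H}^{1}$ is not literally ``without loss of generality,'' since $\ominus$ is asymmetric and the target class becomes $\mathcal{H}^{2}$ (so the conditions to verify are of type (\ref{der_2}) rather than (\ref{der_1})); the paper treats it as a separate, analogous computation, and your pairing of (viii) and (ix) adapts to it after swapping the roles of $t_{0}+h$ and $t_{0}$ and flipping the signs of the increments.
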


\begin{proof}
(a) is obvious. (b) Suppose that $F,G\in \mathcal{H}^{1}$. Using Proposition %
\ref{prop-3} and Propostion \ref{p-1} we have%
\begin{eqnarray*}
&&\underset{h\rightarrow 0^{+}}{\lim }\frac{1}{h}\mathcal{H}((F\ominus
G)(t+h),(F\ominus G)(t)+h(F^{\prime }\ominus G^{\prime })(t)) \\
&=&\underset{h\rightarrow 0^{+}}{\lim }\frac{1}{h}\mathcal{H}(F(t+h)\ominus
G(t+h),(F(t)+hF^{\prime }(t))\ominus (G(t)+hG^{\prime }(t))) \\
&\leq &\underset{h\rightarrow 0^{+}}{\lim }\frac{1}{h}\mathcal{H}%
(F(t+h),F(t)+hF^{\prime }(t)) \\
&&+\underset{h\rightarrow 0^{+}}{\lim }\frac{1}{h}\mathcal{H}%
(G(t+h),G(t)+hG^{\prime }(t))=0
\end{eqnarray*}%
and%
\begin{eqnarray*}
&&\underset{h\rightarrow 0^{+}}{\lim }\frac{1}{h}\mathcal{H}((F\ominus
G)(t),(F+G)(t-h)\ominus h(F^{\prime }+G^{\prime })(t)) \\
&=&\underset{h\rightarrow 0^{+}}{\lim }\frac{1}{h}\mathcal{H}%
(F(t)+G(t),(F(t-h)+hF^{\prime }(t))\ominus (G(t-h)+hG^{\prime }(t))) \\
&\leq &\underset{h\rightarrow 0^{+}}{\lim }\frac{1}{h}\mathcal{H}%
(F(t),F(t-h)+hF^{\prime }(t)) \\
&&+\underset{h\rightarrow 0^{+}}{\lim }\frac{1}{h}\mathcal{H}%
(G(t),G(t-h)+hG^{\prime }(t))=0.
\end{eqnarray*}%
It follows that $F\ominus G\in \mathcal{H}^{1}$ and $(F\ominus G)^{\prime
}=F^{\prime }\ominus G^{\prime }$. Also, it is easy to check that $F+G\in 
\mathcal{H}^{1}$ and $(F+G)^{\prime }=F^{\prime }+G^{\prime }$. A similar
proof establishes the result if $F,G\in \mathcal{H}^{2}$. (c) Suppose that $%
F\in \mathcal{H}^{1}$ and $G\in \mathcal{H}^{2}$. Then using Propostion \ref%
{p-1} we have 
\begin{eqnarray*}
&&\underset{h\rightarrow 0^{+}}{\lim }\frac{1}{h}\mathcal{H}%
((F+G)(t+h),(F+G)(t)+h(F^{\prime }\ominus (-G^{\prime }))(t)) \\
&=&\underset{h\rightarrow 0^{+}}{\lim }\frac{1}{h}\mathcal{H}%
(F(t+h)+G(t+h),F(t)+G(t)+hF^{\prime }(t))\ominus (-hG^{\prime }(t))) \\
&\leq &\underset{h\rightarrow 0^{+}}{\lim }\frac{1}{h}\mathcal{H}%
(F(t+h),F(t)+hF^{\prime }(t)) \\
&&+\underset{h\rightarrow 0^{+}}{\lim }\frac{1}{h}\mathcal{H}%
(G(t),G(t+h)-hG^{\prime }(t))=0
\end{eqnarray*}%
and%
\begin{eqnarray*}
&&\underset{h\rightarrow 0^{+}}{\lim }\frac{1}{h}\mathcal{H}%
((F+G)(t),(F+G)(t-h)+h(F^{\prime }\ominus (-G^{\prime }))(t)) \\
&=&\underset{h\rightarrow 0^{+}}{\lim }\frac{1}{h}\mathcal{H}%
(F(t)+G(t),(F(t-h)+G(t-h)+hF^{\prime }(t))\ominus (-hG^{\prime }(t))) \\
&\leq &\underset{h\rightarrow 0^{+}}{\lim }\frac{1}{h}\mathcal{H}%
(F(t),F(t-h)+hF^{\prime }(t)) \\
&&+\underset{h\rightarrow 0^{+}}{\lim }\frac{1}{h}\mathcal{H}%
(G(t),G(t-h)-hG^{\prime }(t))=0.
\end{eqnarray*}%
It follows that $F+G\in \mathcal{H}^{1}$ and $(F+G)^{\prime }=F^{\prime
}\ominus (-G^{\prime })$. Also we have%
\begin{eqnarray*}
&&\underset{h\rightarrow 0^{+}}{\lim }\frac{1}{h}\mathcal{H}((F\ominus
G)(t+h),(F\ominus G)(t)+h(F^{\prime }+(-G^{\prime }))(t)) \\
&=&\underset{h\rightarrow 0^{+}}{\lim }\frac{1}{h}\mathcal{H}(F(t+h)\ominus
G(t+h),F(t)\ominus G(t)+hF^{\prime }(t)-hG^{\prime }(t)) \\
&\leq &\underset{h\rightarrow 0^{+}}{\lim }\frac{1}{h}\mathcal{H}%
(F(t+h),F(t)+hF^{\prime }(t)) \\
&&+\underset{h\rightarrow 0^{+}}{\lim }\frac{1}{h}\mathcal{H}%
(G(t),G(t+h)-hG^{\prime }(t))=0
\end{eqnarray*}%
and%
\begin{eqnarray*}
&&\underset{h\rightarrow 0^{+}}{\lim }\frac{1}{h}\mathcal{H}((F\ominus
G)(t),(F\ominus G)(t-h)+h(F^{\prime }+(-G^{\prime }))(t)) \\
&=&\underset{h\rightarrow 0^{+}}{\lim }\frac{1}{h}\mathcal{H}(F(t)\ominus
G(t),F(t-h)\ominus G(t-h)+hF^{\prime }(t)-hG^{\prime }(t)) \\
&\leq &\underset{h\rightarrow 0^{+}}{\lim }\frac{1}{h}\mathcal{H}%
(F(t-h),F(t)+hF^{\prime }(t)) \\
&&+\underset{h\rightarrow 0^{+}}{\lim }\frac{1}{h}\mathcal{H}%
(G(t-h),G(t)-hG^{\prime }(t))=0.
\end{eqnarray*}%
It follows that $F\ominus G\in \mathcal{H}^{1}$ and $(F\ominus G)^{\prime
}=F^{\prime }+(-G^{\prime })$. Now, we suppose that $F\in \mathcal{H}^{2}$
and $G\in \mathcal{H}^{1}$. Then using Propostion \ref{p-1}, we obtain that%
\begin{eqnarray*}
&&\underset{h\rightarrow 0^{+}}{\lim }\frac{1}{h}\mathcal{H}%
((F+G)(t),(F+G)(t+h)-h(F^{\prime }\ominus (-G^{\prime }))(t)) \\
&=&\underset{h\rightarrow 0^{+}}{\lim }\frac{1}{h}\mathcal{H}%
(F(t)+G(t),F(t+h)+G(t+h)+(-hF^{\prime }(t))\ominus (hG^{\prime }(t))) \\
&\leq &\underset{h\rightarrow 0^{+}}{\lim }\frac{1}{h}\mathcal{H}%
(F(t),F(t+h)-hF^{\prime }(t)) \\
&&+\underset{h\rightarrow 0^{+}}{\lim }\frac{1}{h}\mathcal{H}%
(G(t+h),G(t)+hG^{\prime }(t))=0
\end{eqnarray*}%
and%
\begin{eqnarray*}
&&\underset{h\rightarrow 0^{+}}{\lim }\frac{1}{h}\mathcal{H}%
((F+G)(t-h),(F+G)(t)-h(F^{\prime }\ominus (-G^{\prime }))(t)) \\
&=&\underset{h\rightarrow 0^{+}}{\lim }\frac{1}{h}\mathcal{H}%
(F(t-h)+G(t-h),F(t)+G(t)+(-hF^{\prime }(t))\ominus (hG^{\prime }(t))) \\
&\leq &\underset{h\rightarrow 0^{+}}{\lim }\frac{1}{h}\mathcal{H}%
(F(t-h),F(t)-hF^{\prime }(t)) \\
&&+\underset{h\rightarrow 0^{+}}{\lim }\frac{1}{h}\mathcal{H}%
(G(t),G(t-h)+hG^{\prime }(t))=0
\end{eqnarray*}%
It follows that $F+G\in \mathcal{H}^{2}$ and $(F+G)^{\prime }=F^{\prime
}\ominus (-G^{\prime })$. Finally using Propostion \ref{p-1}, we have that%
\begin{eqnarray*}
&&\underset{h\rightarrow 0^{+}}{\lim }\frac{1}{h}\mathcal{H}((F\ominus
G)(t),(F\ominus G)(t+h)-h(F^{\prime }+(-G^{\prime }))(t)) \\
&=&\underset{h\rightarrow 0^{+}}{\lim }\frac{1}{h}\mathcal{H}(F(t)\ominus
G(t),F(t+h)\ominus G(t+h)+(-hF^{\prime }(t))+hG^{\prime }(t)) \\
&\leq &\underset{h\rightarrow 0^{+}}{\lim }\frac{1}{h}\mathcal{H}%
(F(t),F(t+h)-hF^{\prime }(t)) \\
&&+\underset{h\rightarrow 0^{+}}{\lim }\frac{1}{h}\mathcal{H}%
(G(t+h),G(t)+hG^{\prime }(t))=0
\end{eqnarray*}%
and%
\begin{eqnarray*}
&&\underset{h\rightarrow 0^{+}}{\lim }\frac{1}{h}\mathcal{H}((F\ominus
G)(t-h),(F\ominus G)(t)-h(F^{\prime }+(-G^{\prime }))(t)) \\
&=&\underset{h\rightarrow 0^{+}}{\lim }\frac{1}{h}\mathcal{H}(F(t-h)\ominus
G(t-h),F(t)\ominus G(t)+(-hF^{\prime }(t))+hG^{\prime }(t)) \\
&\leq &\underset{h\rightarrow 0^{+}}{\lim }\frac{1}{h}\mathcal{H}%
(F(t-h),F(t)-hF^{\prime }(t)) \\
&&+\underset{h\rightarrow 0^{+}}{\lim }\frac{1}{h}\mathcal{H}%
(G(t),G(t-h)+hG^{\prime }(t))=0.
\end{eqnarray*}%
It follows that $F\ominus G\in \mathcal{H}^{2}$ and $(F\ominus G)^{\prime
}=F^{\prime }+(-G^{\prime })$.
\end{proof}

\vskip0.3cm

\begin{theorem}
\label{th-11}\textit{\ If }$F:[a,b]\rightarrow \mathcal{K}$ \textit{is left
(right) }$gH$\textit{-differentiable at }$t_{0}\in (a,b]$ \textit{(}$%
t_{0}\in \lbrack a,b)$\textit{), then }$F$\textit{\ is left (right)
differentiable at }$t_{0}\in (a,b]$\textit{\ (}$t_{0}\in \lbrack a,b)$%
\textit{) and }$D_{H}^{-}F(t_{0})=F_{-}^{\prime }(t_{0})$ \textit{(}$%
D_{H}^{+}F(t_{0})=F_{+}^{\prime }(t_{0})$\textit{)}. \vskip0.3cm
\end{theorem}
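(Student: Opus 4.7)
My plan is to translate the hypothesis that the left $gH$-derivative exists directly into one of the two Hausdorff-metric alternatives (\ref{der_11})-(\ref{der_12}), using only the equivalence (\ref{equiv-1}) and the basic metric properties collected in Proposition \ref{p-1}. Suppose $D_{H}^{-}F(t_{0})=A\in \mathcal{K}$ exists, and set $C_{h}:=\tfrac{1}{h}(F(t_{0})\ominus F(t_{0}-h))$ for small $h>0$. By Definition \ref{def-1}, $C_{h}\to A$ in $(\mathcal{K},\mathcal{H})$ as $h\to 0^{+}$, so $\mathcal{H}(C_{h},A)\to 0$.

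For each such $h$, I would apply (\ref{equiv-1}) to the identity $F(t_{0})\ominus F(t_{0}-h)=hC_{h}$. This yields two cases governed by widths: in case (a), when $w(F(t_{0}))\geq w(F(t_{0}-h))$, one has $F(t_{0})=F(t_{0}-h)+hC_{h}$; in case (b), when $w(F(t_{0}))<w(F(t_{0}-h))$, one has $F(t_{0}-h)=F(t_{0})+(-hC_{h})$. Feeding (a) into the first alternative, translation invariance (Proposition \ref{p-1}(i)) and positive homogeneity (Proposition \ref{p-1}(ii)) give
\[
\tfrac{1}{h}\mathcal{H}(F(t_{0}),F(t_{0}-h)+hA)=\tfrac{1}{h}\mathcal{H}(F(t_{0}-h)+hC_{h},F(t_{0}-h)+hA)=\mathcal{H}(C_{h},A)\longrightarrow 0,
\]
which is exactly (\ref{der_11}). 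Feeding (b) into the second alternative, the same reductions together with $\mathcal{H}(-X,-Y)=\mathcal{H}(X,Y)$ give
\[
\tfrac{1}{h}\mathcal{H}(F(t_{0}-h),F(t_{0})-hA)=\tfrac{1}{h}\mathcal{H}(F(t_{0})-hC_{h},F(t_{0})-hA)=\mathcal{H}(C_{h},A)\longrightarrow 0,
\]
which is (\ref{der_12}). In either case $F_{-}^{\prime }(t_{0})=A=D_{H}^{-}F(t_{0})$.

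The one delicate point, and the main obstacle, is that (\ref{der_11}) and (\ref{der_12}) are required along the full parameter $h\to 0^{+}$, so I must rule out an oscillation between cases (a) and (b) on arbitrarily small $h$. When $w(A)=0$ the element $A$ is a singleton, the two cases collapse, and either conclusion follows. When $w(A)>0$, the convergence $w(C_{h})\to w(A)>0$ forces $w(F(t_{0}))\neq w(F(t_{0}-h))$ for all sufficiently small $h$; combined with left continuity of $F$ at $t_{0}$, which itself follows from $\mathcal{H}(F(t_{0}),F(t_{0}-h))=\Vert F(t_{0})\ominus F(t_{0}-h)\Vert =h\Vert C_{h}\Vert \to 0$, a sign argument on the quantity $w(F(t_{0}))-w(F(t_{0}-h))$ selects a single case for all small $h$. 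The right-sided statement is obtained by the symmetric argument with $t_{0}-h$ replaced by $t_{0}+h$ and (\ref{der_11})-(\ref{der_12}) replaced by (\ref{der_21})-(\ref{der_22}).
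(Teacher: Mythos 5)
Your core computation is correct and is in substance identical to the paper's: the paper reduces $\tfrac{1}{h}\mathcal{H}(F(t_{0}),F(t_{0}-h)+hA)$ to $\mathcal{H}\bigl(\tfrac{1}{h}(F(t_{0})\ominus F(t_{0}-h)),A\bigr)$ via Proposition \ref{p-1}(v), and $\tfrac{1}{h}\mathcal{H}(F(t_{0}-h),F(t_{0})-hA)$ to the same quantity via Proposition \ref{p-1}(vi) together with Proposition \ref{prop-3}(b); your route through (\ref{equiv-1}) plus translation invariance and homogeneity yields exactly the same two identities in exactly the same two width regimes. Where you go beyond the paper is in observing that each identity is only valid in its own regime, so that one must exclude oscillation between the regimes as $h\to 0^{+}$. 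The paper's proof is silent on this point (its appeal to Proposition \ref{p-1}(v) tacitly assumes that $F(t_{0})\ominus F(t_{0}-h)$ is an $H$-difference for all small $h$), so you have correctly located the delicate step.

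However, your proposed fix --- the ``sign argument'' --- does not work. The facts you invoke, namely $w(C_{h})\to w(A)>0$ and left continuity of $F$ at $t_{0}$, give only $|w(F(t_{0}))-w(F(t_{0}-h))|=h\,w(C_{h})\sim h\,w(A)$ and $w(F(t_{0}-h))\to w(F(t_{0}))$; neither controls the \emph{sign} of $w(F(t_{0}))-w(F(t_{0}-h))$. Concretely, take $t_{0}=0$, $F(0)=[0,1]$, and for $h>0$ put $F(-h)=[h,1-h]$ when $h$ lies in a set $S_{a}$ accumulating at $0$, and $F(-h)=[-h,1+h]$ when $h$ lies in a complementary set $S_{b}$ also accumulating at $0$. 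Then $\tfrac{1}{h}(F(0)\ominus F(-h))=[-1,1]$ for every small $h$, so $D_{H}^{-}F(0)=[-1,1]$ exists, $F$ is left continuous at $0$, and $w(C_{h})\equiv 2>0$, yet $w(F(0))-w(F(-h))=\pm 2h$ changes sign on arbitrarily small $h$. Worse, a direct check shows that in this example (\ref{der_11}) fails along $S_{b}$ and (\ref{der_12}) fails along $S_{a}$ for \emph{every} candidate $A=[\alpha,\beta]\in\mathcal{K}$ (each would force $\alpha=1>\beta=-1$ on the offending set), so neither alternative holds as a limit over all $h\to 0^{+}$. Thus the obstruction you identified is not merely a gap in your argument but defeats the statement as literally written, and equally defeats the paper's own proof. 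The implication is recovered only under an extra hypothesis that pins down a single width regime near $t_{0}$ --- e.g.\ that $w(F(t))-w(F(t_{0}))$ has constant sign for $t$ near $t_{0}$ on the relevant side, equivalently that the relevant $H$-differences exist consistently --- and under such a hypothesis your two-case analysis does complete the proof.
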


\begin{proof}
If $F$ is left $gH$-differentiable at $t_{0}\in (a,b]$, then there exist an
element $A=D_{H}^{-}F(t_{0})\in \mathcal{K}$ such that%
\begin{equation*}
\begin{tabular}{l}
$\underset{h\rightarrow 0^{+}}{\lim }\frac{1}{h}\mathcal{H}%
(F(t_{0}),F(t_{0}-h)+hA)=\underset{h\rightarrow 0^{+}}{\lim }\frac{1}{h}%
\mathcal{H}(F(t_{0})\ominus F(t_{0}-h),hA)$ \\ 
\\ 
$=\underset{h\rightarrow 0^{+}}{\lim }\mathcal{H}\left( \frac{1}{h}%
(F(t_{0})\ominus F(t_{0}-h)),A\right) =0.$%
\end{tabular}%
\end{equation*}%
or%
\begin{equation*}
\begin{tabular}{l}
$\underset{h\rightarrow 0^{+}}{\lim }\frac{1}{h}\mathcal{H}%
(F(t_{0}-h),F(t_{0})-hA)=\underset{h\rightarrow 0^{+}}{\lim }\frac{1}{h}%
\mathcal{H}(F(t_{0}-h)\ominus F(t_{0}),(F(t_{0})-hA)\ominus F(t_{0}))$ \\ 
\\ 
$\underset{h\rightarrow 0^{+}}{\lim }\frac{1}{h}\mathcal{H}%
(F(t_{0}-h)\ominus F(t_{0}),-hA)=\underset{h\rightarrow 0^{+}}{\lim }\frac{1%
}{h}\mathcal{H}(-(F(t_{0})\ominus F(t_{0}-h)),-hA)$ \\ 
\\ 
$=\underset{h\rightarrow 0^{+}}{\lim }\mathcal{H}\left( \frac{1}{h}%
(F(t_{0})\ominus F(t_{0}-h)),A\right) =0.$%
\end{tabular}%
\end{equation*}%
It follows that $F$\ is left differentiable at $t_{0}\in \lbrack a,b)$\ and $%
F_{-}^{\prime }(t_{0})=D_{H}^{-}F(t_{0})$. A similar proof establishes the
result if $F$ is right $gH$-differentiable at $t_{0}\in \lbrack a,b)$.\vskip%
0.3cm
\end{proof}

\begin{corollary}
\label{cor-4}\textit{\ If }$F:[a,b]\rightarrow \mathcal{Q}$ \textit{is }$H$%
\textit{-differentiable at }$t_{0}\in \lbrack a,b]$\textit{, then }$F$%
\textit{\ is differentiable at }$t_{0}\in \lbrack a,b]$\textit{\ and }$%
D_{H}F(t_{0})=F^{\prime }(t_{0})$.\vskip0.3cm
\end{corollary}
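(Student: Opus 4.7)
The plan is to deduce Corollary \ref{cor-4} as a direct consequence of Theorem \ref{th-11}. The key observation is that the two-sided $gH$-derivative $D_HF(t_0)$ in Definition \ref{def-1} is, by its very definition as a two-sided limit, equivalent to the simultaneous existence and equality of the one-sided $gH$-derivatives $D_H^-F(t_0)$ and $D_H^+F(t_0)$. So $H$-differentiability (which is the specialization of $gH$-differentiability to the case $w(F(t_0+h))\ge w(F(t_0))$, guaranteeing the $H$-difference exists) gives us, in particular, that both one-sided $gH$-derivatives exist and coincide with $D_HF(t_0)$.

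Having established that, I would invoke Theorem \ref{th-11} twice. Applied on the left, it yields that $F$ is left differentiable at $t_0$ in the new sense with $F_-'(t_0)=D_H^-F(t_0)=D_HF(t_0)$; applied on the right, it yields that $F$ is right differentiable at $t_0$ with $F_+'(t_0)=D_H^+F(t_0)=D_HF(t_0)$. Consequently $F_-'(t_0)=F_+'(t_0)=D_HF(t_0)$, which by the definition of differentiability given just after Example \ref{ex-4} means that $F$ is differentiable at $t_0$ with $F'(t_0)=D_HF(t_0)$. At the endpoints $a$ and $b$, only the appropriate one-sided statement is used, in accordance with the convention stated in Definition \ref{def-1} and in the paragraph defining differentiability.

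There is no real obstacle here: the corollary is essentially a packaging of Theorem \ref{th-11} together with the definitions. The only subtlety worth making explicit in the write-up is that $H$-differentiability (as opposed to general $gH$-differentiability) is stronger than needed — it simultaneously guarantees existence of both one-sided $gH$-derivatives and their equality — so Theorem \ref{th-11} applies on each side without any additional hypothesis. A one-paragraph proof should suffice, and I would not repeat the metric computations from the proof of Theorem \ref{th-11}.
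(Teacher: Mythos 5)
Your proposal is correct and is exactly the argument the paper intends: the corollary is stated without proof immediately after Theorem \ref{th-11}, and the intended derivation is precisely to split the two-sided $gH$-derivative into the coinciding one-sided $gH$-derivatives and apply Theorem \ref{th-11} on each side. Nothing further is needed.
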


\begin{remark}
\label{rem-7} The converse of the theorem is not true in general as we will
show in next example.\vskip0.3cm
\end{remark}

\begin{example}
\label{ex-7} Consider the function $F:[a,b]\rightarrow \mathcal{K}$ defined
by $F(t)=(2+\sin t)[-1,1]$, $t\in (0,2\pi )$. Then for any $t\in (0,2\pi )$
and $U=[-1,1]$, we have%
\begin{eqnarray*}
&&\underset{h\rightarrow 0^{+}}{\lim }\frac{1}{h}\mathcal{H}%
(F(t+h),F(t)+h\cos t\cdot U) \\
&=&\underset{h\rightarrow 0^{+}}{\lim }\frac{1}{h}\mathcal{H}((2+\sin
(t+h))U,(2+\sin t)U+(h\cos t)U) \\
&=&\underset{h\rightarrow 0^{+}}{\lim }\frac{1}{h}\mathcal{H}(\sin
(t+h)U,(\sin t+h\cos t)U) \\
&=&\underset{h\rightarrow 0^{+}}{\lim }\frac{1}{h}\left\vert \sin (t+h)-\sin
t-h\cos t\right\vert \mathcal{H}_{\mathcal{K}}(U,\theta )) \\
&=&\underset{h\rightarrow 0^{+}}{\lim }\left\vert \frac{\sin (t+h)-\sin t}{h}%
-\cos t\right\vert =0
\end{eqnarray*}%
and%
\begin{eqnarray*}
&&\underset{h\rightarrow 0^{+}}{\lim }\frac{1}{h}\mathcal{H}%
(F(t),F(t-h)+h\cos t\cdot U) \\
&=&\underset{h\rightarrow 0^{+}}{\lim }\frac{1}{h}\mathcal{H}((2+\sin
t)U,(2+\sin (t-h))U+(h\cos t)U) \\
&=&\underset{h\rightarrow 0^{+}}{\lim }\left\vert \frac{\sin t-\sin (t-h)}{h}%
-\cos t\right\vert =0.
\end{eqnarray*}%
It follows that $F$ is differentiable (in fact, $\mathcal{H}^{2}$%
-differentiable) on $(0,2\pi )$ and $F^{\prime }(t)=(\cos t)U$, $t\in
(0,2\pi )$. On the other hand, $F$ is not right $gH$-differentiable nor left 
$gH$-differentiable on $(0,2\pi )$ since $diam(F(t))=2(2+\sin t)$ changes
its monotonicity on $(0,2\pi )$ (see \cite[Theorem 4.1]{banks}).
\end{example}

\section{\textbf{Riemann integral for interval-valued functions}}

\noindent Let $F:[a,b]\rightarrow $\QTR{cal}{K} be a given function. For
each finite partition $\Delta _{n}=\{t_{0},t_{1},...,t_{n}\}$, $%
a=t_{0}<t_{1}<...<t_{n}=b$, of interval $[a,b]$ and for arbitrary system $%
\xi =(\xi _{1},\xi _{2},...,\xi _{n})$ of intermediate points $\xi _{i}\in
\lbrack t_{i-1},t_{i}]$, $i=1,2,...,n$, we consider Riemann sum%
\begin{equation*}
R_{F}(\Delta _{n},\xi )=\sum_{i=1}^{n}(t_{i}-t_{i-1})F(\xi _{i})\text{ and }%
|\Delta _{n}|:=\underset{1\leq i\leq n}{\max }(t_{i}-t_{i-1}).
\end{equation*}%
We say that the function $F:[a,b]\rightarrow $\QTR{cal}{K} is \emph{Riemann
integrable} on $[a,b]$ if there exists an $A\in $\QTR{cal}{K} such that for
each $\varepsilon >0$, there exists a $\delta >0$ so that if $\Delta _{n}$
is any partition of $[a,b]$ and $\xi $ an arbitrary system of intermediate
points, then%
\begin{equation*}
\mathcal{H}(R_{F}(\Delta _{n},\xi ),A)<\varepsilon .
\end{equation*}%
We write $A=\int_{a}^{b}F(t)dt$. It easy to see that, if $F:[a,b]\rightarrow 
$\QTR{cal}{K} is Riemann integrable on $[a,b]$, then the value of the
integral is unique. We observe that, for each partition $\Delta _{n}$ of $%
[a,b]$ with $|\Delta _{n}|\rightarrow 0$ as $n\rightarrow \infty $,\ we have 
\begin{equation*}
\underset{n\rightarrow \infty }{\lim }R_{F}(\Delta _{n},\xi
)=\int_{a}^{b}F(t)dt.
\end{equation*}%
\medskip 

\begin{theorem}
\textit{If }$F:[a,b]\rightarrow $\QTR{cal}{K} \textit{is a continuous
function, then }$F$\textit{\ is Riemann integrable on }$[a,b]$.\vskip0.3cm
\end{theorem}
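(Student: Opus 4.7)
The plan is to prove Riemann integrability by showing that Riemann sums form a Cauchy family as the mesh tends to zero, and then invoking completeness of $(\mathcal{K},\mathcal{H})$. Since $F$ is continuous on the compact interval $[a,b]$ (with respect to the norm $\|\cdot\|$, equivalently the metric $\mathcal{H}$), a standard compactness argument gives that $F$ is uniformly continuous: for every $\varepsilon>0$ there exists $\delta>0$ such that $\mathcal{H}(F(s),F(t))<\varepsilon/(b-a)$ whenever $s,t\in[a,b]$ and $|s-t|<\delta$.

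The main computational step is to compare two Riemann sums $R_F(\Delta_n,\xi)$ and $R_F(\Delta_m,\eta)$ when both partitions have mesh smaller than $\delta$. I would first reduce to the case where one partition refines the other by passing to a common refinement $\Delta$. If $\Delta_n=\{t_0,\dots,t_n\}$ and $\Delta$ subdivides each $[t_{i-1},t_i]$ by points $t_{i-1}=s_{i,0}<\cdots<s_{i,k_i}=t_i$ with intermediate points $\eta_{i,j}\in[s_{i,j-1},s_{i,j}]$, then using $\sum_{j=1}^{k_i}(s_{i,j}-s_{i,j-1})=t_i-t_{i-1}$ we can rewrite
\begin{equation*}
R_F(\Delta_n,\xi)=\sum_{i=1}^{n}\sum_{j=1}^{k_i}(s_{i,j}-s_{i,j-1})F(\xi_i),\qquad R_F(\Delta,\eta)=\sum_{i=1}^{n}\sum_{j=1}^{k_i}(s_{i,j}-s_{i,j-1})F(\eta_{i,j}).
\end{equation*}
Iterating property (iii) of Proposition \ref{p-1} (subadditivity of $\mathcal{H}$ under Minkowski sums) together with property (ii) gives
\begin{equation*}
\mathcal{H}(R_F(\Delta_n,\xi),R_F(\Delta,\eta))\leq \sum_{i,j}(s_{i,j}-s_{i,j-1})\mathcal{H}(F(\xi_i),F(\eta_{i,j})).
\end{equation*}
Since $\xi_i,\eta_{i,j}\in[t_{i-1},t_i]$ with $t_i-t_{i-1}<\delta$, uniform continuity bounds each Hausdorff distance by $\varepsilon/(b-a)$, so the whole sum is at most $\varepsilon$.

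Applying this estimate to both $\Delta_n$ and $\Delta_m$ via the common refinement and using the triangle inequality for $\mathcal{H}$ yields $\mathcal{H}(R_F(\Delta_n,\xi),R_F(\Delta_m,\eta'))<2\varepsilon$ whenever $|\Delta_n|,|\Delta_m|<\delta$. Choosing any sequence of partitions with $|\Delta_n|\to 0$ therefore produces a Cauchy sequence in $(\mathcal{K},\mathcal{H})$; by completeness it converges to some $A\in\mathcal{K}$. The same Cauchy estimate shows that $A$ is independent of the chosen sequence and of the intermediate points, and that $\mathcal{H}(R_F(\Delta_n,\xi),A)<\varepsilon$ for every partition with mesh less than $\delta$ and every choice of tags, which is precisely the definition of Riemann integrability.

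The main obstacle I anticipate is purely notational, namely verifying the subadditivity estimate for $\mathcal{H}$ on sums with many terms; this is a routine induction from Proposition \ref{p-1}(iii). A minor subtlety is that one must avoid any appeal to a Hukuhara difference $R_F(\Delta_n,\xi)\ominus R_F(\Delta_m,\eta)$, whose existence is not guaranteed — but the argument above works directly with $\mathcal{H}$ and needs no such difference.
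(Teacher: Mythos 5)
Your proposal is correct and follows essentially the same route as the paper, which simply says ``as in the classical proof'' that uniform continuity makes the Riemann sums Cauchy and then uses completeness of $(\mathcal{K},\mathcal{H})$ together with an interleaving argument for uniqueness of the limit. You merely supply the details the paper leaves implicit (the common refinement, the iterated use of Proposition \ref{p-1}(ii)--(iii), and the observation that no Hukuhara difference of Riemann sums is needed), all of which are sound.
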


\begin{proof}
As in the classical proof, the uniform continuity of $F$\ implies that $%
R_{F}(\Delta _{n},\xi )$ is a Cauchy sequence for all sequences of
partitions which have $|\Delta _{n}|\rightarrow 0$ as $n\rightarrow \infty $%
. Consideration of the interleaved sequence $R_{F}(\Delta _{1},\xi )$, $%
R_{F}(\Delta _{1}^{\prime },\xi )$, $R_{F}(\Delta _{2},\xi )$, $R_{F}(\Delta
_{2}^{\prime },\xi )$,... shows that all sequences $R_{F}(\Delta _{n},\xi )$
and $R_{F}(\Delta _{n}^{\prime },\xi )$ for which $|\Delta _{n}|\rightarrow 0
$ and $|\Delta _{n}^{\prime }|\rightarrow 0$\ as $n\rightarrow \infty $,
have the same limit.
\end{proof}

\vskip0.3cm

\noindent Let $F,G:[a,b]\rightarrow $\QTR{cal}{K} be Riemann integrable on $%
[a,b]$. Then the follwing properties are obviously by passing to the limit
form corresponding relations for Riemann sums.\medskip 

\noindent (a) For each $\alpha ,\beta \in \mathbb{R}$, $\alpha F+\beta G$ is
Riemann integrable on $[a,b]$ and%
\begin{equation}
\dint_{a}^{b}(\alpha F(t)+\beta G(t))dt=\alpha \dint_{a}^{b}F(t)dt+\beta
\dint_{a}^{b}G(t)dt.  \label{ad_int}
\end{equation}%
(b) $F$ is Riemann integrable on each subinterval of $[a,b]$ and%
\begin{equation}
\dint_{a}^{b}F(t)dt=\dint_{a}^{c}F(t)dt+\dint_{c}^{b}F(t)dt\text{, }a\leq
c\leq b.  \label{sub-int}
\end{equation}%
(c) $t\mapsto \mathcal{H}(F(t),G(t))$ is Riemann integrable on $[a,b]$, and%
\begin{equation}
\mathcal{H}\left( \dint_{a}^{b}F(t)dt,\dint_{a}^{b}G(t)dt\right) \leq
\dint_{a}^{b}\mathcal{H}(F(t),G(t))dt.  \label{H-ineq}
\end{equation}%
(d) 
\begin{equation}
\frac{1}{b-a}\dint_{a}^{b}F(t)dt\in \overline{co}\{F(t);t\in \lbrack a,b]\},
\label{conv}
\end{equation}%
where $\overline{co}\mathcal{M}$\ means the closed convex hull of subset $%
\mathcal{M}\subset $\QTR{cal}{K}.\vskip0.3cm

\begin{theorem}
\textit{If }$F:[a,b]\rightarrow $\QTR{cal}{K} \textit{is a continuous
function on }$[a,b]$\textit{, then the function }$G:[a,b]\rightarrow $%
\QTR{cal}{Q}\textit{, defined by}%
\begin{equation}
G(t)=\dint_{a}^{t}F(s)ds,\text{ }t\in \lbrack a,b]\text{,}
\label{G_primitiv}
\end{equation}%
\textit{is }$\mathcal{H}^{1}$-\textit{differentiable on }$[a,b]$\textit{\
and }$G^{\prime }(t)=F(t)$\textit{\ for each }$t\in \lbrack a,b]$\textit{.}%
\vskip0.3cm
\end{theorem}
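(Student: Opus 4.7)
The plan is to verify both of the conditions appearing in (\ref{der_1}) with $A=F(t)$ by exploiting additivity of the integral over subintervals and the interval-valued analogue of the mean value estimate. The key identity I would use is that, by (\ref{sub-int}),
\begin{equation*}
G(t+h)=G(t)+\int_{t}^{t+h}F(s)\,ds\quad\text{and}\quad G(t)=G(t-h)+\int_{t-h}^{t}F(s)\,ds,
\end{equation*}
so that the Minkowski sum is honest and no $gH$-difference is needed. This is precisely why one expects $\mathcal{H}^1$-differentiability to be the natural notion here.

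First, for the forward condition, I would apply translation invariance (property (i) of Proposition \ref{p-1}) to write
\begin{equation*}
\mathcal{H}(G(t+h),G(t)+hF(t))=\mathcal{H}\Bigl(\int_{t}^{t+h}F(s)\,ds,\,hF(t)\Bigr).
\end{equation*}
Since $hF(t)=\int_{t}^{t+h}F(t)\,ds$ (as a constant-valued Riemann integral), the inequality (\ref{H-ineq}) yields
\begin{equation*}
\mathcal{H}(G(t+h),G(t)+hF(t))\leq \int_{t}^{t+h}\mathcal{H}(F(s),F(t))\,ds.
\end{equation*}
By continuity of $F$ at $t$, for any $\varepsilon>0$ there is $\delta>0$ with $\mathcal{H}(F(s),F(t))<\varepsilon$ whenever $|s-t|<\delta$. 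Hence for $0<h<\delta$ the right-hand side is bounded by $\varepsilon h$, so $\frac{1}{h}\mathcal{H}(G(t+h),G(t)+hF(t))\to 0$ as $h\to 0^{+}$.

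Next, for the backward condition, the same translation invariance gives
\begin{equation*}
\mathcal{H}(G(t),G(t-h)+hF(t))=\mathcal{H}\Bigl(\int_{t-h}^{t}F(s)\,ds,\,hF(t)\Bigr),
\end{equation*}
and exactly the same estimate via (\ref{H-ineq}) bounds this by $\int_{t-h}^{t}\mathcal{H}(F(s),F(t))\,ds\leq \varepsilon h$ for small $h$. Thus both limits in (\ref{der_1}) vanish, so $G$ is $\mathcal{H}^{1}$-differentiable at $t$ with $G'(t)=F(t)$. At the endpoints $t=a$ and $t=b$ only one of the two estimates is needed, yielding the one-sided derivative. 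There is no serious obstacle: the only subtle point is recognizing that the additivity (\ref{sub-int}) gives a genuine Minkowski-sum decomposition, which lets the argument mimic the classical fundamental theorem of calculus without ever invoking the $gH$-difference.
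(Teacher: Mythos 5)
Your proposal is correct and follows essentially the same route as the paper: the paper's proof rests on exactly the two inequalities $\frac{1}{h}\mathcal{H}(G(t+h),G(t)+hF(t))\leq \frac{1}{h}\int_{t}^{t+h}\mathcal{H}(F(s),F(t))\,ds$ and $\frac{1}{h}\mathcal{H}(G(t),G(t-h)+hF(t))\leq \frac{1}{h}\int_{t-h}^{t}\mathcal{H}(F(s),F(t))\,ds$, concluded via continuity of $F$, and your derivation of these bounds from additivity of the integral, translation invariance of $\mathcal{H}$, and the integral inequality is precisely the justification the paper leaves implicit. No gaps.
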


\begin{proof}
Let $t\in \lbrack a,b]$ and $h>0$ such that $t+h,t-h\in \lbrack a,b]$. Since%
\begin{equation*}
\frac{1}{h}\mathcal{H}(G(t),G(t-h)+hF(t))\leq \frac{1}{h}\dint_{t-h}^{t}%
\mathcal{H}\left( F(s),F(t)\right) ds,
\end{equation*}%
\begin{equation*}
\frac{1}{h}\mathcal{H}(G(t-h),G(t)-hF(t))\leq \frac{1}{h}\dint_{t-h}^{t}%
\mathcal{H}\left( \theta ,F(s)-F(t)\right) ds,
\end{equation*}%
\begin{equation*}
\frac{1}{h}\mathcal{H}(G(t+h),G(t)+hF(t))\leq \frac{1}{h}\dint_{t}^{t+h}%
\mathcal{H}\left( F(s),F(t)\right) ds,
\end{equation*}%
\begin{equation*}
\frac{1}{h}\mathcal{H}(G(t),G(t+h)-hF(t))\leq \frac{1}{h}\dint_{t}^{t+h}%
\mathcal{H}\left( \theta ,F(s)-F(t)\right) ds,
\end{equation*}%
and%
\begin{equation*}
\underset{h\rightarrow 0^{+}}{\lim }\frac{1}{h}\dint_{t}^{t+h}\mathcal{H}%
\left( F(s),F(t)\right) ds=\underset{h\rightarrow 0^{+}}{\lim }\frac{1}{h}%
\dint_{t-h}^{t}\mathcal{H}\left( F(s),F(t)\right) ds=0,
\end{equation*}%
we infer that $G$ is $\mathcal{H}^{1}$-differentiable on $[a,b]$\ and $%
G^{\prime }(t)=F(t)$\ for each $t\in \lbrack a,b]$.
\end{proof}

\vskip0.3cm

\begin{theorem}
\textit{Let }$F:[a,b]\rightarrow $\QTR{cal}{K} \textit{be a differentiable
fuction on }$[a,b]$\textit{\ such that }$F^{\prime }$\textit{\ is continuous
on }$[a,b]$.\textit{\ }

\textit{(a) If }$F$\textit{\ is }$\mathcal{H}^{1}$\textit{-differentiable,\
then}%
\begin{equation}
F(t)=F(a)+\dint_{a}^{t}F^{\prime }(s)ds  \label{ch-1}
\end{equation}%
\textit{for any }$t\in \lbrack a,b]$. 

\textit{(b) If }$F$\textit{\ is }$\mathcal{H}^{2}$\textit{-differentiable,\
then}%
\begin{equation}
F(t)=F(a)\ominus \left( -\dint_{a}^{t}F^{\prime }(s)ds\right)   \label{ch-2}
\end{equation}%
\textit{for any }$t\in \lbrack a,b]$.\vskip0.3cm
\end{theorem}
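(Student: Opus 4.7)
My plan, for each part, is to introduce the natural candidate primitive, verify (via the preceding fundamental theorem and Theorem~\ref{th-7}) that it is $\mathcal{H}^{i}$-differentiable with derivative $F^{\prime }$ and agrees with $F$ at $t=a$, and then close by a Dini-derivative comparison using the Hausdorff distance.

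For part (a), I would set $G(t):=F(a)+\int_{a}^{t}F^{\prime }(s)\,ds$. By the preceding theorem, $\int_{a}^{t}F^{\prime }(s)\,ds$ is $\mathcal{H}^{1}$-differentiable with derivative $F^{\prime }(t)$, and translation invariance (Proposition~\ref{p-1}(i)) lets us add the constant $F(a)$ without changing the derivative, so $G\in \mathcal{H}^{1}$, $G^{\prime }=F^{\prime }$, and $G(a)=F(a)$. Setting $\varphi (t):=\mathcal{H}(F(t),G(t))$ and inserting the intermediate points $F(t)+hF^{\prime }(t)$, $G(t)+hF^{\prime }(t)$ in the triangle inequality (together with Proposition~\ref{p-1}(i)) gives
\begin{equation*}
\varphi (t+h)\leq \mathcal{H}(F(t+h),F(t)+hF^{\prime }(t))+\varphi (t)+\mathcal{H}(G(t+h),G(t)+hF^{\prime }(t)).
\end{equation*}
Both outer terms are $o(h)$ by the $\mathcal{H}^{1}$-differentiability of $F$ and $G$, so $D^{+}\varphi (t)\leq 0$ on $[a,b)$. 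Since $\varphi $ is continuous, non-negative, and $\varphi (a)=0$, the standard monotonicity lemma (a continuous function whose upper right Dini derivative is non-positive is non-increasing) forces $\varphi \equiv 0$, giving $F\equiv G$.

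For part (b), put $G(t):=\int_{a}^{t}F^{\prime }(s)\,ds$ and $H(t):=F(a)\ominus (-G(t))$. A direct check using Proposition~\ref{p-1}(ii) with $\lambda =-1$ shows $-G\in \mathcal{H}^{1}$ with $(-G)^{\prime }=-F^{\prime }$. Viewing $F(a)$ as a constant function in $\mathcal{H}^{2}$ with derivative $\{0\}$, Theorem~\ref{th-7}(c) applied with $i=2,\,j=1$ yields $H\in \mathcal{H}^{2}$ with $H^{\prime }=\{0\}+(-(-F^{\prime }))=F^{\prime }$, while $H(a)=F(a)\ominus \{0\}=F(a)$ by Proposition~\ref{prop-3}(a). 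For uniqueness, set $\psi (t):=\mathcal{H}(F(t),H(t))$. To match the shape of the $\mathcal{H}^{2}$-condition (\ref{der_2}), I would first use Proposition~\ref{p-1}(i) to shift both arguments by $-hF^{\prime }(t)$, then insert the intermediate points $F(t)$ and $H(t)$:
\begin{equation*}
\psi (t+h)=\mathcal{H}(F(t+h)-hF^{\prime }(t),H(t+h)-hF^{\prime }(t))\leq \mathcal{H}(F(t+h)-hF^{\prime }(t),F(t))+\psi (t)+\mathcal{H}(H(t),H(t+h)-hF^{\prime }(t)).
\end{equation*}
Both outer terms are $o(h)$ by (\ref{der_2}) for $F$ and for $H$, so again $D^{+}\psi (t)\leq 0$ and the monotonicity lemma yields $\psi \equiv 0$.

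The main obstacle is the bookkeeping in part (b): the $\mathcal{H}^{2}$-condition controls $\mathcal{H}(F(t),F(t+h)-hF^{\prime }(t))$ rather than $\mathcal{H}(F(t+h),F(t)+hF^{\prime }(t))$, so the naive triangle insertion used in part (a) does not produce quantities known to be $o(h)$, and one must first exploit translation invariance of $\mathcal{H}$ to move the correction $-hF^{\prime }(t)$ onto both sides simultaneously. A secondary technical point is checking that $-G$ lands in $\mathcal{H}^{1}$ (not $\mathcal{H}^{2}$), which is precisely what allows Theorem~\ref{th-7}(c) to be invoked with $F(a)\in \mathcal{H}^{2}$ and produce an $\mathcal{H}^{2}$-differentiable primitive of the required form.
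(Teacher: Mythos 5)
Your proof is correct and follows essentially the same strategy as the paper: introduce the candidate primitive $G$, verify $G^{\prime }=F^{\prime }$ via the preceding fundamental theorem, and drive $m(t)=\mathcal{H}(F(t),G(t))$ to zero by showing $D^{+}m\leq 0$ and using monotonicity from $m(a)=0$, $m\geq 0$. The one place you genuinely diverge is part (b), and there your version is actually more complete than the paper's. The paper simply says ``as above we obtain $m(t)=0$'' after citing Corollary 3 of \cite{vl} for $G^{\prime }=F^{\prime }$; but, as you correctly observe, the part (a) triangle insertion does not literally transfer, since $\mathcal{H}^{2}$-differentiability controls $\mathcal{H}(F(t),F(t+h)-hF^{\prime }(t))$ rather than $\mathcal{H}(F(t+h),F(t)+hF^{\prime }(t))$, and your fix --- translating both arguments by $-hF^{\prime }(t)$ via Proposition \ref{p-1}(i) before inserting intermediate points --- is exactly what is needed to make ``as above'' rigorous. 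Your replacement of the external citation by an appeal to Theorem \ref{th-7}(c) (constant $F(a)\in \mathcal{H}^{2}$, $-G\in \mathcal{H}^{1}$) is a reasonable self-contained alternative; the only caveat is that Theorem \ref{th-7}(c) carries the hypothesis that the relevant Hukuhara differences exist, so strictly speaking you should check that $F(a)\ominus (-G(t))$ is a genuine $H$-difference, i.e.\ $w(F(a))\geq w\bigl(\int_{a}^{t}F^{\prime }(s)\,ds\bigr)$, which holds for $\mathcal{H}^{2}$-differentiable $F$ because its width is nonincreasing --- a point the paper also leaves to the cited corollary.
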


\begin{proof}
(a) Suppose that $F$ is $\mathcal{H}^{1}$-differentiable on $[a,b]$ and $%
F^{\prime }$\ is continuous on $[a,b]$.\textit{. }If we put $%
G(t):=F(a)+\dint_{a}^{t}F^{\prime }(s)ds$, $t\in \lbrack a,b]$, then $%
G^{\prime }(t)=F^{\prime }(t)$ for all $t\in \lbrack a,b]$. Define $m(t):=%
\mathcal{H}(F(t),G(t))$, $t\in \lbrack a,b]$. Then, we have%
\begin{eqnarray*}
m(t+h)-m(t) &=&\mathcal{H}(F(t+h),G(t+h))-\mathcal{H}(F(t),G(t)) \\
&\leq &\mathcal{H}(F(t+h),F(t)+hF^{\prime }(t))+ \\
&&+\mathcal{H}(F(t)+hF^{\prime }(t),G(t)+hF^{\prime }(t)) \\
&&+\mathcal{H}(G(t)+hF^{\prime }(t),G(t+h))-\mathcal{H}(F(t),G(t)) \\
&=&\mathcal{H}(F(t+h),F(t)+hF^{\prime }(t))+ \\
&&+\mathcal{H}(G(t+h),G(t)+hF^{\prime }(t))\text{,}
\end{eqnarray*}%
and thus 
\begin{eqnarray*}
D^{+}m(t) &=&\underset{h\rightarrow 0^{+}}{\lim \sup }\frac{m(t+h)-m(t)}{h}
\\
&\leq &\underset{h\rightarrow 0^{+}}{\lim }\frac{1}{h}\mathcal{H}%
(F(t+h),F(t)+hF^{\prime }(t)) \\
&&+\underset{h\rightarrow 0^{+}}{\lim }\frac{1}{h}\mathcal{H}%
(G(t+h),G(t)+hF^{\prime }(t))=0.
\end{eqnarray*}%
Therefore, $D^{+}m(t)\leq 0$ for all $t\in \lbrack a,b)$ and so $m$ is a
decreasing function on $[a,b]$. Since $m(a)=0$, it follows that $m(t)\leq
m(a)=0$ for all $t\in \lbrack a,b]$. On the other hand, we have that $%
m(t)\geq 0$ for all $t\in \lbrack a,b]$ and so $m(t)=0$ for all $t\in
\lbrack a,b]$; that is, (\ref{ch-1}). (b) Suppose that $F$ is $\mathcal{H}%
^{2}$-differentiable on $[a,b]$\textit{. }If we put $G(t):=F(a)\ominus
\left( -\dint_{a}^{t}F^{\prime }(s)ds\right) $, $t\in \lbrack a,b]$, then by
Corollary 3 in \cite{vl} it follows that $G^{\prime }(t)=F^{\prime }(t)$ for
all $t\in \lbrack a,b]$. As above we obtain that $m(t)=0$ for all $t\in
\lbrack a,b]$; that is, (\ref{ch-2}).
\end{proof}

\vskip0.3cm

\noindent {\large Conclusion} \textit{This new concept of differentiability
for interval-valued functions avoids the use of generalized Hukuhara
difference. It is well known that the generalized difference Hukuhara }$%
A\ominus B$\textit{\ does not generally exist if }$A$\textit{\ and }$B$%
\textit{\ are compact sets in }$R^{n}$\textit{\ with }$n\geq 2$\textit{\ or
if }$A$\textit{\ and }$B$\textit{\ are fuzzy sets. Therefore, this new
concept of differentiability can be much more efficient in these situations
than the concepts previously known. The extension of these results to fuzzy
functions, as well as their applications to differential equations, will be
developed in a few future works}. \textit{We will also extend this concept
to functions with values {}{}in much more general spaces, namely to
functions with values in quasilinear spaces.}

\vskip0.3cm

\end{document}